\documentclass[reqno]{amsart}
\usepackage{mathrsfs}
\usepackage{amsfonts}
\usepackage{amsmath}

\begin{document}

\begin{center}
{\bf\LARGE The Second Main Theorem Concerning Small Algebroid
Functions.}$^*$
\end{center}
 \vskip.1in
  \vskip.1in

\begin{center} Daochun Sun\\
{ \footnotesize ( School of Mathematics, South China Normal
University, Guangzhou 510631, China)}\\
 Zongsheng Gao\\
  {\footnotesize ( LMIB and Department of Mathematics, Beijing
University of Aeronautics and Astronautics, Beijing 100083,
China)}\\
Huifang Liu\\
{ \footnotesize ( School of Mathematics, South China Normal
University, Guangzhou 510631, China)}\\
\end{center}
 \vskip.1in
{\bf  Abstract.}
 In this paper, we firstly give the definition of meromorphic function element and algebroid mapping.
 We also construct the
algebroid function family in which the arithmetic, differential
operations is closed. On basis of these works, we firstly proved the
Second Main Theorem
concerning small algebroid functions for $v$-valued algebroid functions.\\
{\bf Keywords.} algebroid function, algebroid mapping, corresponding
addition,
the Second Main Theorem. \\
{\bf MSC(2000).} 32C20,  30D45.\\
\footnote[0] {\\
$^*$This work is supported by the National Nature Science Foundation
of China(No.10771011, 10871076).}

\numberwithin{equation}{section}
\newtheorem{theorem}{Theorem}[section]
\newtheorem{lemma}[theorem]{Lemma}
\newtheorem{conjecture}{Conjecture}
\newtheorem{corollary}{Corollary}[section]
\newtheorem{question}{Question}
\newtheorem{remark}{Remark}[section]
\newtheorem{definition}{Definition}[section]
\allowdisplaybreaks \large

\section{Introduction}
In 1925, R. Nevanlinna obtained the Second Main Theorem for
meromorphic functions and posed the problem whether the the Second
Main Theorem can be extended to small functions (See \cite{a1}.).
Dealing with the problem, Q. T. Chuang proved the Second Main
Theorem still holds for small entire functions (See \cite{a2},
\cite{a3}.). Until 1986, the problem was solved by N. Steinmetz (See
\cite{a4}.). In 2000, M. Ru proved the Second Main Theorem
concerning small meromorphic functions for algebroid functions (See
\cite{a5}.).

It is natural to consider the problem whether the Second Main
Theorem for algebroid functions is still true when we replace the
small meromorphic functions by small algebroid functions. Before
considering  the problem, we must define the arithmetic,
differential operations over algebroid functions. Hence we give the
definition of meromorphic function element, algebroid mapping and
construct the algebroid function family $H_W$. In $H_W$ the
arithmetic, differential operations is closed. On basis of these
works, by using the method of Reference \cite{a6}, we proved the
Second Main Theorem concerning small algebroid functions.

 Suppose that $A_v(z),\cdots,A_0(z)$ are
analytic functions without common zeros in the complex plane $C$.
Then the binary complex equation
$$\Psi(z,W)= A_v(z)W^v+A_{v-1}(z)W^{v-1}+\cdots+A_0(z)=0 $$
defines a $v$-valued algebroid function $W(z)$ in the complex plane
$C$. The above equation can be transformed to the standard equation
$$\Psi^*(z,W)= W^v+A^*_{v-1}(z)W^{v-1}+\cdots+A^*_0(z)=0,$$ where
$A^*_t(z):=\frac{A_t(z)}{A_v(z)}~~ (t=0,1,2,\cdots,v-1)$ are
meromorphic functions in the complex plane $C$. Note that for a
$v$-valued algebroid function $W(z)$,  its standard equation is
unique.

If $\Psi(z,W)$ is irreducible, then the corresponding $W(z)$ is
called a $v$-valued irreducible algebroid function. For an
irreducible algebroid function $W(z)$, the points in the complex
plane can be divided to two classes. One is a set $T_W$ of regular
points of $W(z)$, the other is a set $S_W=C-T_W$ of critical points
of $W(z)$. The set $S_W$ is an isolated set (See \cite{a7},
\cite{a8}.).

In this paper, $\Psi(z,W)$ needn't be irreducible in the usual case.
A $v$-valued algebroid function $W(z)$ may decompose to $n(\geq 1)$
number of $v_n$-valued irreducible algebroid functions(containing
the case $W$ is a complex constant) and $v=\sum^v_{j=1}v_j$.

For a $v$-valued reducible algebroid function $W(z)$, its
corresponding binary complex equation $\Psi(z,W)=0$ can be
decomposed to the product of $q(\leq v)$ non-meromorphic coprime
factors, namely
$$\Psi(z,W)=\Psi_1(z,W)\Psi_2(z,W)\cdots\Psi_q(z,W)=0.$$
Let $S_j$ denote the set of critical points of the irreducible
complex equation $\Psi_t(z,W)=0$. We define the set of critical
points of reducible algebroid function $W(z)$ by
$S_W:=\cup^q_{j=1}S_j$ (Since $\{S_j\}(j=1, \cdots, q) $ are all
isolated sets, $S_W$ is also an isolated set.), the set of regular
points of reducible algebroid function $W(z)$ by $T_W:=C-S_W$.

\begin{remark}
If $q=1$, then $W(z)$ is an irreducible algebroid function.
\end{remark}

\begin{remark}
If $(q(z),b)$ is a polar element or a multivalent
algebraic function element, then $b\in S_W$.
\end{remark}

\begin{remark}
For every $a\in T_W$, there exist and only exist $v$ number of
regular function elements $\{(w_t(z),a)\}^v_{t=1}$. In this paper,
we usually denote $W(z)=\{w_j(z)\}^v_{j=1}$. If there exists $1\leq
t<j\leq v$ such that $w_t(z)\equiv w_j(z)$, then the complex
equation $\psi(z,W)=0$ must have non-meromorphic function multiple
factor.
\end{remark}
In this paper, we use the standard notations of the value
distribution for algebroid functions (See \cite{a7}.).

\section{Some basic properties of algebroid functions}

\begin{definition}
Let $W(z)$ and $M(z)$ be two  algebroid functions defined by
$$\Psi(z,W)= A_v(z)W^v+A_{v-1}(z)W^{v-1}+\cdots+A_0(z)= A_v(z)\prod^v_{j=1}(W-w_j(z))=0,
~~A_v(z)\not\equiv 0\eqno(2.1)$$
 and
$$\Phi(z,M)= B_s(z)M^s+B_{s-1}(z)M^{s-1}+\cdots+B_0(z)= B_s(z)\prod^s_{t=1}(M-m_t(z))=0,
~~B_s(z)\not\equiv 0,\eqno(2.2)$$ respectively, $W(z)$ and $M(z)$
are called identical, write $W(z) \equiv M(z)$, provided that $v=s$
and the corresponding coefficients are proportional, namely
$$
E(z):=\frac{A_v(z)}{B_v(z)}=\frac{A_{v-1}(z)}{B_{v-1}(z)}=\cdots=\frac{A_0(z)}{B_0(z)}.$$
\end{definition}
Since the coefficients of the equations (2.1) and (2.2) haven't
common zeros, $E(z)$ is a nonzero constant or an analytic function
without zeros.

\begin{theorem}
Suppose that $W(z)=\{w_j(z)\}^v_{j=1}$ and $M(z)=\{m_t(z)\}^s_{t=1}$
are two irreducible algebroid functions defined by (2.1) and (2.2),
respectively. The following conditions are equivalent:

(1) $W(z) \equiv M(z)$.

(2) There exist some regular function elements $(w_j(z),b)$ of
$W(z)$ and $(m_j(z),b)$ of $M(z)$ such that $(w_j(z),b)=(m_j(z),b)$.

(3) The eliminant $R(\Psi,\Phi) \equiv 0$.\\
\end{theorem}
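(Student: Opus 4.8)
The plan is to prove the cycle of implications $(1)\Rightarrow(2)\Rightarrow(3)\Rightarrow(1)$, exploiting the fact that both $W(z)$ and $M(z)$ are \emph{irreducible}, which forces any partial coincidence to propagate globally. For $(1)\Rightarrow(2)$, I would observe that $W(z)\equiv M(z)$ with $v=s$ and proportional coefficients means the two standard equations $\Psi^*(z,W)=0$ and $\Phi^*(z,M)=0$ coincide after dividing by $A_v(z)$ and $B_s(z)$ respectively; since $E(z)$ is a nonvanishing analytic function, the monic equations are literally equal. Hence at any common regular point $b\in T_W\cap T_M$ the two unordered families of local solutions $\{w_j(z)\}$ and $\{m_t(z)\}$ satisfy the same monic polynomial in $W$, so they coincide as sets, and in particular some function element $(w_j(z),b)$ equals some $(m_j(z),b)$.

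For $(2)\Rightarrow(3)$, I would use the standard identity for the eliminant (resultant) of two polynomials in $W$:
$$R(\Psi,\Phi)(z)=A_v(z)^s\,B_s(z)^v\prod_{j=1}^{v}\prod_{t=1}^{s}\bigl(w_j(z)-m_t(z)\bigr),$$
valid as an identity between meromorphic functions on $T_W\cap T_M$. If a single regular element satisfies $(w_j(z),b)=(m_j(z),b)$, then $w_j(z)-m_t(z)$ vanishes identically on a neighborhood of $b$ for the matching index pair, so the product above vanishes on an open set; by the identity theorem $R(\Psi,\Phi)\equiv 0$ throughout $C$. The irreducibility is what guarantees that the germ identity at $b$ continues analytically to a genuine coincidence of a full branch, rather than an accidental single-point equality.

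The implication $(3)\Rightarrow(1)$ is where I expect the main obstacle, and it is the step that genuinely needs irreducibility. From $R(\Psi,\Phi)\equiv0$ one obtains, away from the isolated critical sets, that for each $z$ some factor $w_j(z)-m_t(z)$ vanishes; but a priori the matching index $t=t(j,z)$ could jump from point to point, so one must upgrade a pointwise coincidence to an identical common branch. The plan is to argue by analytic continuation and a pigeonhole/monodromy argument: the zero set of each $w_j-m_t$ is either discrete or all of a neighborhood, and since there are only finitely many pairs $(j,t)$ but a continuum of points, some fixed pair must satisfy $w_j(z)\equiv m_t(z)$ on an open set. Once one regular element is shared, I would invoke the irreducibility of $W(z)$: analytically continuing $w_j$ around the (isolated) critical points $S_W$ permutes all $v$ branches transitively, and the same continuation carries the shared branch of $M$ along, forcing every branch of $W$ to coincide with a branch of $M$; a symmetric argument with the irreducibility of $M$ gives $v=s$ and the equality of the two monic polynomials, whence the coefficients are proportional and $W(z)\equiv M(z)$.

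The delicate point throughout is handling the critical sets $S_W$ and $S_M$ correctly: the resultant identity and the branch-coincidence argument hold only on the complement $T_W\cap T_M$, so I would phrase the continuation on the simply-connected pieces of $C\setminus(S_W\cup S_M)$ and use that both critical sets are isolated (as recorded earlier in the excerpt) to conclude that an identity valid off an isolated set extends to all of $C$ for the globally single-valued coefficient functions $A_t^*$ and $B_t^*$.
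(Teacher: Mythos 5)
Your proposal is correct, and it relies on the same essential ingredients as the paper's proof --- the factorization of the eliminant $R(\Psi,\Phi)=A_v^s(z)B_s^v(z)\prod_{j,t}\bigl(w_j(z)-m_t(z)\bigr)$, the identity theorem for analytic functions, the connectedness of the Riemann surface of an irreducible algebroid function, and Viete's theorem --- but it runs the cycle of implications in the opposite direction. The paper proves $(1)\Rightarrow(3)\Rightarrow(2)\Rightarrow(1)$: for $(1)\Rightarrow(3)$ it uses the other standard representation $R(\Psi,\Phi)=A_v^s(z)\prod_j\Phi(z,w_j(z))$ together with proportionality of coefficients; for $(3)\Rightarrow(2)$ it extracts one identically vanishing factor $w_j-m_t$ from the product; and $(2)\Rightarrow(1)$ is the continuation-plus-Viete argument. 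You instead prove $(1)\Rightarrow(2)$ directly from equality of the two monic equations (no resultant needed at all), get $(2)\Rightarrow(3)$ from the easy direction of the product formula plus the identity theorem, and fold the paper's two remaining steps into your $(3)\Rightarrow(1)$: a pigeonhole argument to extract one fixed pair $(j,t)$ with $w_j\equiv m_t$ on an open set, then monodromy and irreducibility to propagate this to all branches, with symmetry giving $v=s$. A side benefit of your arrangement is that the pigeonhole step (each zero set of $w_j-m_t$ is discrete or everything, and finitely many discrete sets cannot cover an open disc) is spelled out more carefully than the paper's one-line assertion ``there at least exists some term $w_j(z)-m_t(z)\equiv 0$'' in its $(3)\Rightarrow(2)$. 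One small inaccuracy in your write-up: in $(2)\Rightarrow(3)$ you claim irreducibility is what upgrades the germ identity at $b$; it is not needed there, since condition (2) already asserts equality of function elements, i.e., identity on a whole disc, so the identity theorem alone suffices. Irreducibility enters only in your $(3)\Rightarrow(1)$ (in the paper, only in its $(2)\Rightarrow(1)$).
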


\begin{proof}
(1)$\Rightarrow$(3):
$$R(\Psi,\Phi)=A^s_v(z)\prod^v_{j=1}\Phi(z,w_j(z))
=E(z)A^s_v(z)\prod^v_{j=1}\Psi(z,w_j(z))\equiv 0.$$ By the property
of the eliminant, the first equal sign holds (See \cite{a9}.). Then
by Definition 2.1, we get the second equal sign. Since $(w_j(z),
z)(j=1, \cdots, v)$are regular function elements belong to (2.1),
$\Psi(z,w_j(z))\equiv 0$ in some neighborhood of $z$. Combining the
identical principle of analytic functions, we get the third equal
sign.

(3)$\Rightarrow$(2):Since
$$R(\Psi,\Phi)=A^s_v(z)\prod^v_{j=1}\Phi(z,w_j(z))=A^s_v(z)B^v_s(z)\prod^v_{j=1}\prod^s_{t=1}
(w_j(z)-m_t(z))\equiv 0.$$ there at least exists some term
$w_j(z)-m_t(z)\equiv 0$. Hence  there exist some regular function
element $(w_j(z),a)$ of $W(z)$ and $(m_j(z),a)$ of $M(z)$ such that
$(w_j(z),a)=(m_j(z),a)$.

(2)$\Rightarrow$(1): Since the irreducible algebroid function is a
connected Riemann surface, the two identical regular function
elements can be continued analytically to their Riemann surface
respectively,  such that the corresponding regular function elements
are all identical. Hence $v=s$. Then combining the Viete theorem, we
get
$$\frac{A_{t}(z)}{A_v(z)}=\frac{B_{t}(z)}{B_s(z)}=
\sum (-1)^{v-t}w_{n_1}(z)w_{n_2}(z)\cdots w_{n_{v-t}}(z)
(t=0,1,2,\cdots,v-1),$$ where $w_{n_1}(z), w_{n_2}(z), \cdots,
w_{n_{v-t}}(z)$ denote any given $v-t$ distinct elements among
$w_1(z), \cdots, w_v(z)$. From this we can obtain (1).
\end{proof}

Note that by Theorem 2.1, an irreducible algebroid function $W(z)$
can not contain two same regular function elements.

\begin{theorem}
Suppose that $W(z)=\{(w_{j}(z),B(a,r_a))\}^v_{j=1}$ is a $v$-valued
algebroid function defined by (2.1). If it contains two same regular
function elements, then there exist two same $m $-valued ($2m\leq
v$) algebroid functions decomposed from $W(z)$. Hence $W(z)$ is
reducible.
\end{theorem}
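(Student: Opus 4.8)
The plan is to combine the decomposition of $W(z)$ into irreducible factors with the characterization in Theorem 2.1. First I would decompose
$$\Psi(z,W)=\Psi_1(z,W)\Psi_2(z,W)\cdots\Psi_q(z,W),$$
where each coprime factor $\Psi_i$ determines a $v_i$-valued irreducible algebroid function $W_i$ and $v=\sum_{i=1}^q v_i$. Let $(w_t(z),B(a,r_a))=(w_j(z),B(a,r_a))$ be the two same regular function elements, with $t\neq j$. Since these are regular elements over $a$, we have $a\in T_W=\bigcap_{i=1}^q T_{W_i}$, so $a$ is a regular point of every component $W_i$; hence each of the two given elements is a regular function element of exactly one component, say $(w_t(z),a)$ belongs to $W_p$ and $(w_j(z),a)$ belongs to $W_{p'}$.

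The crucial step is to prove $p\neq p'$. By Remark 2.3 applied to the irreducible algebroid function $W_p$, the regular point $a$ carries exactly $v_p$ regular function elements of $W_p$; and by the note following Theorem 2.1, an irreducible algebroid function contains no two same regular function elements, so these $v_p$ elements are pairwise distinct. Consequently two identical elements cannot both lie on a single component, which forces $p\neq p'$.

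It then remains to compare these two distinct components. Because $W_p$ and $W_{p'}$ are irreducible and share the common regular function element $(w_t(z),a)=(w_j(z),a)$, the implication $(2)\Rightarrow(1)$ of Theorem 2.1 yields $W_p\equiv W_{p'}$; in particular their valences agree, and setting $m:=v_p=v_{p'}$ exhibits two same $m$-valued algebroid functions decomposed from $W(z)$. As $W_p$ and $W_{p'}$ are two distinct factors contributing $v_p+v_{p'}=2m$ to $v=\sum_{i}v_i$, we obtain $2m\leq v$; and since $\Psi(z,W)$ carries at least the two factors $\Psi_p$ and $\Psi_{p'}$, the function $W(z)$ is reducible.

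The main obstacle I expect is the bookkeeping that attaches each of the two given function elements to a single well-defined irreducible component and that splits the $v$ elements over $a$ as $v=\sum_i v_i$ with the $v_i$ elements of each component mutually distinct. Once this distinctness inside a component is in hand, the identity $W_p\equiv W_{p'}$ and the bound $2m\leq v$ follow immediately from Theorem 2.1.
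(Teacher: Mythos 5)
Your reduction to irreducible components runs into a concrete inconsistency. You posit a factorization $\Psi=\Psi_1\Psi_2\cdots\Psi_q$ whose factors are simultaneously coprime, irreducible, and satisfy $v=\sum_i v_i$, and your argument then concludes $W_p\equiv W_{p'}$ for two \emph{distinct} factors. But by Definition 2.1, $W_p\equiv W_{p'}$ means the coefficients of $\Psi_p$ and $\Psi_{p'}$ are proportional, so these two factors are certainly not coprime. In other words, under the hypothesis of Theorem 2.2 the decomposition you start from does not exist: the whole content of the theorem is that $\Psi$ has a repeated irreducible factor, and a polynomial with a repeated factor admits no factorization into pairwise coprime irreducible factors of total degree $v$. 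As written, your proof therefore terminates in a contradiction with its own setup rather than in the conclusion. This is repairable --- factor $\Psi$ into irreducible factors \emph{allowing repetitions}, drop coprimality, and rephrase the dichotomy as ``the two roots are assigned to the same factor or to different factors'' (with repetitions an element no longer lies on exactly one component) --- but the repair has to be made.

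The more serious issue is where the substance of the theorem is hidden. Your crucial step, that two identical elements cannot both lie on one irreducible component, is carried entirely by the note following Theorem 2.1. That note is exactly the contrapositive of Theorem 2.2 restricted to irreducible $W$, i.e.\ it \emph{is} the essential case of the statement being proved; and its justification in the paper (``by Theorem 2.1'') is not a proof: applying Theorem 2.1 with $M=W$ gives no information, since all three conditions hold trivially. The paper's own proof of Theorem 2.2 supplies precisely this missing content, by a different and uniform route: from $(w_j(z),a)\equiv(w_t(z),a)$ one gets $R(\Psi,\Psi_W)=(-1)^{v(v-1)/2}A_v^{2v-1}(z)\prod_{j<t}\left(w_j(z)-w_t(z)\right)^2\equiv 0$, and Theorem 2.4 of reference [7] then yields a multiple non-meromorphic factor of $\Psi$, which is exactly the asserted pair of identical $m$-valued components. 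So your argument either needs an independent proof of the note (for instance, this same discriminant argument applied to the single factor $\Psi_p$, or a monodromy/connectedness argument showing that identical elements on a connected component force $\Psi_p$ to be a proper power), or it is circular. A small citation slip besides: the count of $v_p$ regular elements over a regular point is Remark 1.3, not Remark 2.3.
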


\begin{proof}
Suppose that $(w_j(z),B(a,r_a))\equiv(w_t(z),B(a,r_a))$. Then
$$R(\Psi, \Psi_W) =(-1)^{\frac{v(v-1)}{2}}A^{2v-1}_v(z)\prod_{1\leq j< t \leq v}(w_j(z)-w_t(z))^2
\equiv 0.$$ By Theorem 2.4 in reference [7],  $\Psi(z,W)$ must have
the non-meromorphic function multiple factor. Hence there exist two
same $m$-valued ($2m\leq v$) algebroid function decomposed from
$W(z)$. So $W(z)$ is reducible.
\end{proof}

\begin{definition}
Meromorphic function element is defined by $(q(z),B(a,r))$, where
$q(z)$ is analytic in the disc $B_0(a,r):=\{0<|z-a|<r\}$ and $a$ is
not a essential point. So $q(z)$ can be expressed by Laurent series
$q(z)=\sum^{\infty}_{n=t}a_n(z-a)^n ~(a_t\ne 0)$. We also denote it
by $(q(z),a)$. If the above $t<0$, then we call $(q(z),a)$ is a
truth meromorphic function element. Especially if $q(z)\equiv c$
($c$ denotes a constant.).\\
Two meromorphic function elements $(q(z),a)$ and $(p(z),b)$ are
called identical provided that $a=b$ and there exists $r>0$ such
that $q(z)\equiv p(z)$ in the disc $B_0(a,r)$.\\
If $\Psi(z,q(z))=0$ holds for any $z\in B_0(a,r)$, then $(q(z),a)$
is called a meromorphic function element of algebroid function
$W(z)$ or $\Psi(z,W)=0$.
\end{definition}

\begin{remark}The regular function element is also the meromorphic function
element.
\end{remark}

\begin{definition}
The regular function element $(p(z),B(b,R_b))$ is called the direct
continuation of meromorphic function element $(q(z),B(a,R_a))$
provided that $b\in B(a,R_a)$ and in the domain $B(a,R_a)\cap
B(b,R_b)$ we have $p(z)\equiv q(z)$.\\
For any $\epsilon\in(0,R_a)$, the set of meromorphic function
element $(q(z),B(a,R_a))$ and all direct continuation of meromorphic
function element $(q(z),B(a,R_a))$ in the disc $B_0(a,\epsilon)$ is
called a neighborhood of $(q(z),B(a,R_a))$. We denote it by
$V_\epsilon(q(z),a)$.
\end{definition}

\begin{remark}
For any given point in $B_0(a,R_a)$, the direct continuation is
uniqueness.
\end{remark}

\begin{remark}
The direct continuation of meromorphic function element must be
regular function element. Hence the truth meromorphic function
element is isolated.
\end{remark}

\begin{definition}
Let $W(z)=\{(w_{a,j}(z),a)\}$ be a $v$-valued algebroid function.
$h$ is called an algebroid mapping of $W(z)$ if $h$ satisfies the following conditions.\\
(i)Uniqueness: For any regular function element $(w_{a,j}(z),a)$,
its image element $h\circ (w_{a,j}(z),a)= (h\circ w_{a,j}(z),a)$ is meromorphic function element and unique.\\
(ii)Continuation: For any image element $(h\circ w_{a,j}(z),a)$,
there exists  $\epsilon=\epsilon(h\circ w_{a,j}(z),a)>0$ such that
for any regular function element $(w_b(z),b)\in
V_{\epsilon}(w_{a,j},a)$, we have $(h\circ w_b(z),b)\in
V_{\epsilon}(h\circ w_{a,j},a)$.\\
(iii)Weak boundary: If $a\in S_W$, then $h$ is weak bounded at the
neighborhood of $a$. Namely there exist integer $p>0$, real numbers
$r>0$ and $M>0$, such that for any $b\in B_0(a,r):=\{z; 0<|z-a|<r\}
\subset T_W$ and any $t=1,2,\cdots,v$, the corresponding image
element $(h\circ w_{b,t}(z),b)$ are all the regular function
elements and satisfies $|(b-a)^ph\circ w_{b,t}(b)|<M$.
\end{definition}

\begin{theorem}
Let $h$ be an algebroid mapping of $v$-valued algebroid function
$W(z)=\{(w_{a,j}(z),a)\}$. Then\\
(1)$h\circ W(z):=\{(h\circ w_{a,j}(z),a)\}$ is a $v$-valued
algebroid function. \\
(2)If $W(z)$ is irreducible, then $h\circ
W(z)$ is irreducible if and only if $h$ is injective. Namely $h\circ
(w(z),a)\ne h\circ (m(z),b)$) when $(w(z),a)\ne (m(z),b)$, where
$(w(z),a)$ and $(m(z),b)$ are regular function elements.
\end{theorem}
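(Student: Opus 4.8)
The plan is to prove the two parts of Theorem 2.4 separately, using the structure of algebroid mappings established in Definition 2.4 together with the continuation and weak-boundary properties.

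For part (1), I would proceed as follows. First I would observe that by property (i), for each regular point $a\in T_W$ the $v$ function elements $\{(w_{a,j}(z),a)\}_{j=1}^v$ are carried to $v$ meromorphic function elements $\{(h\circ w_{a,j}(z),a)\}_{j=1}^v$, so the candidate object $h\circ W$ assigns exactly $v$ branches over each regular point. The key step is to show that these branches are governed by a single binary equation with meromorphic coefficients. To do this I would form the elementary symmetric functions of the images,
$$\sigma_k(a):=\sum_{j_1<\cdots<j_k} (h\circ w_{a,j_1})(a)\cdots(h\circ w_{a,j_k})(a),\qquad k=1,\dots,v,$$
and argue that each $\sigma_k$ is a well-defined single-valued function on $T_W$. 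Single-valuedness follows from the continuation property (ii): as $a$ moves through $T_W$, the unordered collection $\{h\circ w_{a,j}\}$ varies continuously by (ii) and hence any monodromy only permutes the branches, leaving the symmetric functions unchanged; analyticity of $\sigma_k$ on $T_W$ follows since each image element is a regular (hence analytic) function element there. Thus $\sigma_k$ is analytic on $T_W=C-S_W$.

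The main obstacle, and the heart of the proof, is to show that the $\sigma_k$ extend meromorphically across the isolated set $S_W$, so that $h\circ W$ is genuinely a $v$-valued algebroid function on all of $C$ rather than merely on $T_W$. This is exactly where property (iii) is needed. For $a\in S_W$ I would apply the weak-boundary hypothesis: there exist $p>0$, $r>0$, $M>0$ such that $|(b-a)^p\,h\circ w_{b,t}(b)|<M$ for all $b\in B_0(a,r)\subset T_W$ and all $t$. This gives the pointwise bound $|(b-a)^p\,\sigma_k(b)|\le\binom{v}{k}M^k$ on $B_0(a,r)$, so $(z-a)^{p}\sigma_k(z)$ is an analytic function on the punctured disc that stays bounded after multiplication by a further power of $(z-a)$; hence $a$ is at worst a pole, and $\sigma_k$ is meromorphic at $a$. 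Since $S_W$ is isolated, this shows each $\sigma_k$ is meromorphic on all of $C$. I would then clear denominators: writing the standard equation
$$W^v-\sigma_1 W^{v-1}+\sigma_2 W^{v-2}-\cdots+(-1)^v\sigma_v=0$$
and multiplying by a common analytic multiple of the poles of the $\sigma_k$, I obtain a binary equation $\Psi_{h\circ W}(z,W)=0$ with analytic coefficients having no common zero, whose $v$ roots over each $a\in T_W$ are precisely $\{h\circ w_{a,j}(a)\}$. This exhibits $h\circ W$ as a $v$-valued algebroid function.

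For part (2), assume $W$ is irreducible. I would prove both implications. If $h$ is injective, I would show $h\circ W$ is irreducible by transporting the connectedness of the Riemann surface of $W$: since $W$ is irreducible its elements are linked by analytic continuation along paths in $T_W$, and property (ii) guarantees that $h$ carries such a continuation of $W$-elements to a continuation of the corresponding image elements, so the image elements form a single connected system; injectivity ensures no collapse that could split off a proper factor, and by Theorem 2.2 (applied contrapositively, since no two image elements coincide) $h\circ W$ cannot be forced to be reducible through repeated elements. Conversely, if $h$ is not injective there exist distinct regular elements $(w(z),a)\ne(m(z),b)$ with $h\circ(w(z),a)=h\circ(m(z),b)$; continuing both along $T_W$ and using (ii) produces two coincident regular function elements of $h\circ W$, whence by Theorem 2.2 the function $h\circ W$ must split off two identical $m$-valued factors and is therefore reducible. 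The delicate point here is justifying that analytic continuation and the mapping $h$ commute, i.e. that the image of a continued branch is the continuation of the image branch; this is precisely the content of the continuation axiom (ii), and I would make the monodromy argument explicit by covering a continuation path with finitely many neighborhoods $V_\epsilon$ and applying (ii) on each overlap.
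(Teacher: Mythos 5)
Your proposal follows essentially the same route as the paper: in part (1) you form the elementary symmetric functions of the image elements (the paper's $H^*_t$, with signs from Viete's theorem), use the continuation axiom (ii) to get single-valued analytic coefficients on the regular set, use the weak-boundary axiom (iii) to conclude the singularities on $S_W$ are at worst poles, and clear denominators; in part (2) you transport connectedness of the Riemann surface of $W$ through $h$ via axiom (ii) for one direction, and invoke Theorem 2.2 on repeated elements for the other. Both match the paper's proof step for step.

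There is, however, one concrete inaccuracy in your part (1): you assert that each $\sigma_k$ is \emph{analytic} on all of $T_W$ because ``each image element is a regular (hence analytic) function element there.'' That is not guaranteed: axiom (i) only requires the image $(h\circ w_{a,j}(z),a)$ to be a \emph{meromorphic} function element, so over a regular point of $W$ the image branches may have poles, and the $\sigma_k$ are then only meromorphic on $T_W$, not analytic. The paper handles exactly this by introducing the set $P_h\subset T_W$ of poles of $h$ (points of $T_W$ where some image element is a truth meromorphic element), showing $P_h$ is isolated via axiom (ii) together with the fact that direct continuations are regular elements, constructing the analytic coefficients only on $T_W-P_h$, and then treating $S_W\cup P_h$ as isolated non-essential singularities. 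Your weak-boundedness argument covers $S_W$ but says nothing about $P_h$; at those points one instead uses that the image elements have finite principal parts, so the symmetric functions again have at worst poles. This is a genuine omission, though easily repaired within your framework. A second, purely computational slip: from $|(b-a)^p\,h\circ w_{b,t}(b)|<M$ one gets $|(b-a)^{pk}\sigma_k(b)|\le\binom{v}{k}M^k$, with exponent $pk$ rather than $p$; your subsequent phrasing (``bounded after multiplication by a further power'') effectively corrects this, and the conclusion that $a$ is at worst a pole stands.
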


\begin{proof}
For any $z_0\in T_W$, if there exists some truth meromorphic
function element among the corresponding meromorphic image elements
$\{(h\circ w_{z_0,j}(z),z_0)\}^v_{j=1}$, then $z_0$ is called a pole
of $h$. We denote by $P_h$ the set of poles of $h$. By the
continuation of $h$, we know that $P_h$ is an isolated set.

(1)Firstly we define the analytic functions
$\{H^*_t(z)\}^{v-1}_{t=0}$ in $T_W-P_h$. For any $z_0\in T_W - P_h$,
the corresponding image elements $\{(h\circ
w_{z_0,j}(z),z_0)\}^v_{j=1}$ are all regular function elements. Set
$$H^*_t(z_0)=\sum (-1)^{v-t}[h\circ w_{z_0,j_1}(z_0)]\cdot[h\circ
w_{z_0,j_2}(z_0)]\cdot ...\cdot [h\circ
w_{z_0,j_{v-t}}(z_0)],\hspace{0.3cm} t=0,1,2,...,v-1. $$ By the
continuation of $h$, there exists $\epsilon$, such that for any
$y\in B(z_0,\epsilon)$, the corresponding image elements $\{(h\circ
w_{y,j}(z),y)\}$ are the  direct continuation of $\{(h\circ
w_{z_0,j}(z),z_0)\}$ respectively. Namely we have $h\circ
w_{y,j}(z)\equiv h\circ w_{z_0,j}(z)$ in the neighborhood of $y$. So
we have $$ H^*_t(y)=\sum (-1)^{v-t}[h\circ w_{y,j_1}(y)]\cdot[h\circ
w_{y,j_2}(y)]\cdot ...\cdot [h\circ w_{y,j_{v-t}}(y)]$$
$$=\sum (-1)^{v-t}[h\circ w_{z_0,j_1}(y)]\cdot[h\circ w_{z_0,j_2}(y)]\cdot ...\cdot [h\circ w_{z_0,j_{v-t}}(y)]. $$
Hence in $B(z_0,\epsilon)$, for any $t=0,1,...,v-1$ we have
$$
H^*_t(z)\equiv \sum (-1)^{v-t}[h\circ w_{z_0,j_1}(z)]\cdot[h\circ
w_{z_0,j_2}(z)]\cdot ...\cdot [h\circ w_{z_0,j_{v-t}}(z)]. $$ So
$\{H^*_t(z)\}$ is analytic in $B(z_0,\epsilon)$. By Viete theorem,
they define the following complex equation
$$\Phi^*(z,W)= W^v+H^*_{v-1}(z)W^{v-1}+...+H^*_0(z)=\prod^v_{j=1}[W-h\circ w_{z_0,j}(z)]=0$$
and $\Phi^*(z,h\circ w_{z_0,j}(z))=0$ in $B(z_0,\epsilon)$. Since
$z_0$ is arbitrary, $\{H^*_t(z)\}^{v-1}_{t=0}$ are analytic in
$T_W-P_h$.

When $z_0\in S_W\cup P_h$, since $h$ is weak bounded, $z_0$ is the
isolated singular point and is not the essential isolated singular
point of $\{H^*_t(z)\}$. This shows that $\{H^*_t(z)\}^{v-1}_{t=0}$
are meromorphic in the complex plane and the corresponding complex
equation $\Phi^*(z,W)=0$ defines the algebroid function $h\circ
W(z)$.

(2)Suppose that $h$ is injective. For any two regular image elements
$(h\circ w_{a,j}(z),a)\ne(h\circ w_{b,t}(z),b)$, they define
uniquely two distinct regular primary image elements $(h\circ
w_{a,j}(z),a)\ne(h\circ w_{b,t}(z),b)$. Take a path $\gamma\subset
T_W\cap T_{h\circ W}$ such that two  primary image elements can be
continued analytically each other along $\gamma$. By the
continuation of $h$, we know that $(h\circ w_{a,j}(z),a)$ and
$(h\circ w_{b,t}(z),b)$ can be connected by
$\gamma$. Hence $h\circ W(z)$ is irreducible.\\
Conversely suppose that there exist two different regular function
elements $(w_{a,j}(z),a)\ne (w_{a,t}(z),a)$($j\ne t$) such that the
corresponding image elements $(h\circ w_{j}(z),a)= (h\circ
w_{t}(z),a)$). Then by Theorem 2.2, $h\circ W(z)$ is reducible.
\end{proof}

\begin{definition}
Suppose that $W(z)=\{(w_{j}(z),a)\}$ is a $v$-valued algebroid
function defined by the following complex equation
$$\Psi(z,w)=A_v(z)W^v+A_{v-1}(z)W^{v-1}+...+A_1(z)W+A_0(z)$$
$$=A_v(z)(W-w_1(z))(W-w_2(z))...(W-w_v(z))=0,$$
and $f(z)$ is meromorphic in the complex plane $C$.

1) Define $h_{-W}\circ (w_{j}(z),a):=(-w_{j}(z),a)$. By Viete
theorem, the complex equation with respect to $h_{-W}\circ W(z)$ is
$$\Psi_{-W}(z,w):=A_v(z)(W-(-w_1(z)))(W-(-w_2(z)))...(W-(-w_v(z)))$$
$$=A_v(z)W^v-A_{v-1}(z)W^{v-1}+...+(-1)^vA_0(z)=0.$$
The $v$-valued algebroid function $h_{-W}\circ W(z)$ is called the
negative element of $W(z)$. We denote it by $-W(z)$, denote the
algebroid mapping $h_{-W}$ by $-h$.

2) Define $h_{1/W}\circ (w_{j}(z),a):=(\frac{1}{w_{j}(z)},a)$.By
Viete theorem, the complex equation with respect to $h_{1/W}\circ
W(z)$ is
$$\Psi_{1/W}(z,w):=A_v(z)(W-\frac{1}{w_1(z)})(W-\frac{1}{w_2(z)})...(W-\frac{1}{w_v(z)})$$
$$=A_0(z)W^v-A_1(z)W^{v-1}+...+A_v(z)=0.$$
The $v$-valued algebroid function $h_{1/W}\circ W(z)$ is called the
inverse element of $W(z)$. We denote it by $\frac{1}{W(z)}$, denote
the algebroid mapping $h_{1/W}$ by $\frac{1}{h}$.

\begin{remark}
Especially, $W(z)\equiv 0$ is also the algebroid function. Its
inverse element is defined as $\frac{1}{W(z)}\equiv \infty$ and
$\frac{1}{W(z)}$ is also the algebroid function.
\end{remark}

3) Define $h_f\circ (w_{j},a)=(f(z),a)$. It is easy to prove that
$h_f$ satisfies Definition 2.4. So $h_f$ is an algebroid mapping. By
Theorem 2.3, The $v$-valued algebroid function $h_f \circ
W(z)=\{f(z)\}$ are $v$ same meromorphic functions $f(z)$.
Especially, if $f(z)\equiv c\in {\overline C}$, then the algebroid
function $h_c\circ W(z)=\{c\}$ degenerates into $v$ same finite or
infinite complex constants.

4) Define $h_{W^\prime}\circ (w_j(z),a)=(w^\prime_j(z),a)$. It is
easy to prove that $h_{W^\prime}$ satisfies the conditions 1, 2 of
Definition 2.4. If $z_0\in S_W$, then in
$B_0(z_0,r):=\{0<|z-z_0|<r\}$ we have $$
q_t(z):=\sum^\infty_{n=u_t}a_{n,t}(z-a_0)^{n/\lambda_t},~t=1,2,...,m,$$
where $\lambda_t$ is a positive integer, $u_t$ is an integer and
$\sum^m_{t=1}\lambda_t=v$. It is easy to see that
$$h_{W^\prime}\circ q_t(z)=\sum^\infty_{n=u_t}\frac{na_{n,t}}{\lambda_t}(z-a_0)^{\frac{n-\lambda_t}
{\lambda_t}},~t=1,2,...,m$$ is weak bounded. By Theorem 2.3,
$h_{W^\prime}\circ W(z)$ defines a $v$-valued algebroid function. We
call it the derivative of $W(z)$. We denote it by $h_{W^\prime}\circ
W(z)=W^\prime(z)$. The complex equation with respect to
$W^\prime(z)$ is
$$\Psi^\prime(z,w):=B_v(z)(W^\prime-w^\prime_1(z))(W^\prime-w^\prime_2(z))...(W^\prime-w^\prime_v(z))$$
$$:=B_v(z)(W^\prime)^v+B_{v-1}(z)(W^\prime)^{v-1}+...+B_1(z)W^\prime+B_0(z)=0.$$
\end{definition}

\begin{definition}
Let $W(z)=\{(w_j(z),a)\}^v_{j=1}$ be a $v$-valued algebroid
function. The set of all algebroid mappings of $W(z)$ is denoted by
$Y_W$. The set $$H_W:=\{h\circ W(z); h\in Y_W \}$$ is called the
algebroid function class of $W(z)$.\\

Set $$X_W:=\{f \in H_W; T(r,f)=o[T(r,W)]~(r\rightarrow
\infty,~r\not\in E_f ) \},$$ where $E_f$ is a real number set of
finite linear measure depending on $f$. $X_W$ is called the small
algebroid function set of $W(z)$. The element in $X_W$ is called the
small algebroid function of $W(z)$.
\end{definition}
Note that the set $X_W$ contains all the finite or infinite complex
constants, all the small meromorphic functions and all the small
algebroid functions.

\begin{definition}
Let the set of all algebroid mappings of $W(z)$ be $Y_W$ and
$H_W:=\{h\circ W(z); h\in Y_W \}$. For any $h_1,h_2\in Y_W$, define\\
1)Addition: $(h_1+h_2)\circ W(z)=h_1\circ W(z)+h_2\circ W(z)$.\\
2)Subtraction: $(h_1-h_2)\circ W(z)=h_1\circ W(z)- h_2\circ W(z)$.\\
3)Multiplication: $(h_1\cdot h_2)\circ W(z)=(h_1\circ W(z))\cdot(h_2\circ W(z))$.\\
4)Division: $(\frac{h_1}{h_2})\circ W(z)=h_1\circ W(z)
\cdot\frac{1}{h_2}\circ W(z)$.\\
It is easy to prove that they satisfy Definition 2.4. Hence they are
all algebroid mappings. So $H_W$ is a linear space and is closed
with respect to Multiplication and Division.
\end{definition}

Suppose that $\{a_j(z)\}$,$\{b_i(z)\}$ are two group of analytic
functions defined in the complex plane $C$, without no common zeros.
The function $$ q[z,w]:=\frac{
a_n(z)w^n+a_{n-1}(z)w^{n-1}+...+a_0(z)}{b_m(z)w^m+b_{m-1}(z)w^{m-1}+...+b_0(z)}$$
is called rational complex function with meromorphic coefficients.
The set of all rational complex functions with meromorphic
coefficients is denoted by $Q[z,w]$. By the above definition,
Definitions 2.5 and 2.6, for any $q[z,w]\in Q[z,w]$, $q\circ
\{(w_{j}(z),a)\}= \{(q[z,w_{j}(z)],a)\}\in H_W$ is the algebroid
function. So $q[z,w]\in Y_W$.

 Especially, when $Q(z)$ is a single
valued rational function defined in the complex plane, $Q\circ
W(z):=\{Q\circ w_j(z),a\}$ is the $v$-valued algebroid function. If
$W(z)$ is irreducible and $Q$ is linear, then $Q\circ W(z)$ is
irreducible. If $q[z,w]=w\in Q[z,w]$, then $q\circ
\{(w_{j}(z),a)\}=\{( w_{j}(z),a)\}=W(z)$ is an identical mapping.

\begin{theorem}
Suppose that $h$ is an algebroid mapping of $v$-valued irreducible
algebroid function $W(z)=\{(w_{j}(z),a)\}$. If $h\circ W(z)$ is
reducible, then it can split to $n(\geq 1)$ number of $m$-valued
irreducible algebroid functions and $v=mn$.
\end{theorem}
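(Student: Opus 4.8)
The plan is to transport the transitive monodromy of the irreducible function $W(z)$ onto the branches of $h\circ W(z)$ by means of the continuation property (ii) of Definition 2.4. First I would fix a generic base point $a\in T_W\cap T_{h\circ W}$ lying outside the isolated pole set $P_h$ introduced in the proof of Theorem 2.3; there $W(z)$ has exactly $v$ regular function elements $\{(w_j(z),a)\}_{j=1}^v$, and by Theorem 2.3(1) their images $\{(h\circ w_j(z),a)\}_{j=1}^v$ are the $v$ regular branches, counted with repetition, of the $v$-valued algebroid function $h\circ W(z)$. Its defining equation then factors into irreducible non-meromorphic factors $\Phi^*(z,W)=\prod_{l}\Phi_l(z,W)^{e_l}$, where each $\Phi_l$ collects one continuation-orbit of \emph{distinct} image branches, occurring with multiplicity $e_l$. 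Thus the theorem reduces to two claims: there is only one such orbit, and every distinct branch occurs with the same multiplicity.

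For the single-orbit claim I would use that, $W(z)$ being irreducible, its Riemann surface is connected, so any two of its regular function elements $(w_i(z),a)$ and $(w_j(z),a)$ are joined by analytic continuation along a path $\gamma\subset T_W$ (this is the fact already invoked in the proof of (2)$\Rightarrow$(1) of Theorem 2.1). Applying property (ii) of Definition 2.4 successively along $\gamma$, exactly as in the proof of Theorem 2.3(2), the image element $(h\circ w_i(z),a)$ is carried by continuation into $(h\circ w_j(z),a)$. Consequently all distinct branches of $h\circ W(z)$ lie on a single connected Riemann surface, so the reduced equation is irreducible and $\Phi^*(z,W)=\Phi_1(z,W)^{e}$ for a single irreducible $\Phi_1$ of some degree $m$.

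It then remains to check uniform multiplicity and count. Given two distinct branches $(h\circ w_i(z),a)$ and $(h\circ w_j(z),a)$, the continuation around a loop that carries the former to the latter permutes the $v$ positions $\{1,\dots,v\}$ (the monodromy permutation of $W$), and by the equivariance furnished by property (ii) it maps the positions whose image equals $(h\circ w_i(z),a)$ bijectively onto those whose image equals $(h\circ w_j(z),a)$. Hence all $m$ distinct branches are repeated the same number $e$ of times, so $v=me$; writing $n=e$ shows $h\circ W(z)$ splits into $n$ copies of the single $m$-valued irreducible function determined by $\Phi_1$ with $v=mn$, and reducibility forces $n\geq 2$ by Theorem 2.2, consistent with the stated $n\geq 1$. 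The hard part will be the rigorous transport of continuation: one must ensure the path joining $(w_i(z),a)$ to $(w_j(z),a)$ in $T_W$ can be chosen or perturbed to avoid the isolated critical set $S_{h\circ W}$ and the pole set $P_h$, so that property (ii) genuinely applies at every step and the induced continuation of $h\circ W$ stays regular; once the single-orbit property is secured, the identification of coincident image elements via Theorem 2.2 and the multiplicity bookkeeping are routine.
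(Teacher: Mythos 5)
Your proposal is correct, and it turns on the same geometric fact as the paper's argument --- the Riemann surface of the irreducible $W$ is connected, and property (ii) of Definition 2.4 transports continuation along a path in $T_W$ to continuation of the image elements --- but you organize the proof along a genuinely different route. The paper proceeds iteratively: Theorem 2.3(2) gives non-injectivity of $h$, Theorem 2.2 then splits off two \emph{equal} $m$-valued components, and the connectivity argument is repeated to force each remaining branch to coincide with a branch of the first component, peeling off equal $m$-valued pieces one at a time until $v=nm$ is exhausted. You instead prove one global statement --- all distinct image elements at a generic base point form a single continuation orbit --- so that the defining polynomial of $h\circ W$ is a power $\Phi_1^{e}$ of a single irreducible factor, and then obtain the count $v=me$ from uniform multiplicity. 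Two remarks. First, your separate monodromy-equivariance step is strictly speaking redundant: once the square-free part is known irreducible, unique factorization over the field of meromorphic coefficients already forces every distinct branch to occur with the same multiplicity, since an irreducible factor is square-free and distinct irreducible factors have disjoint root sets; your equivariance argument is nevertheless a legitimate substitute that stays entirely in the paper's element-and-continuation language and avoids invoking factorization theory. Second, the ``hard part'' you flag --- perturbing the connecting path off $S_{h\circ W}\cup P_h$ --- is real but harmless precisely because those sets are isolated; the paper's proof of Theorem 2.3(2) uses the same point silently by taking $\gamma\subset T_W\cap T_{h\circ W}$, so this is a technicality, not a gap. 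On balance, your version buys a cleaner, non-iterative proof with the multiplicity bookkeeping made explicit, while the paper's version stays inside its own Theorems 2.1--2.3 and replaces your appeal to polynomial factorization by the stepwise splitting.
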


\begin{proof}
By Theorem 2.3, we know that $h$ isn't injective.Namely there exist
two regular function element $(w_1(z),a)\ne (w_2(z),a)$, such that
the image elements $(h\circ w_1(z),a)=(h\circ w_2(z),a)$. By Theorem
2.2, $h\circ W(z)=\{(h\circ w_{j}(z),a)\}$ can split at least two
equal $m$-valued ($2m\leq v$) algebroid functions $$h\circ
W_1(z)=\{(h\circ w_1(z),a)\}=h\circ W_2(z)=\{(h\circ w_2(z),a)\}.$$

If $2m<v$, then there exist the regular function elements
$$(h\circ w_3(z),a)\in h\circ W(z)-h\circ W_1(z)-h\circ W_2(z)$$
and $(h\circ w_4(z),a)\in h\circ W_1(z)=\{(h\circ w_1(z),a)\}$ such
that $(h\circ w_3(z),a)=(h\circ w_4(z),a)$(Otherwise, since the
primary images $(w_3(z),a)$ and $(w_4(z),a)$ are connected, $(h\circ
w_3(z),a)$ and $(h\circ w_4(z),a)$ are also connected, which
contradicts the fact that $W_1(z)$ is an alhgebroid function.).
Hence by Theorem 2.1£¬from $(h\circ w_3(z),a)$ we can continue a
$m$-valued algebroid function $h\circ W_3(z)$ such that it equals to
$h\circ W_1(z)$. This work doesn't stop until we get $n$ same
$m$-valued algebroid functions with $nm=v$.
\end{proof}

\begin{corollary}
Suppose that $h$ is an algebroid mapping
 of $v$-valued irreducible algebroid function $W(z)=\{(w_{j}(z),a)\}$.
If $v$ is prime, then $h\circ W(z)$ is irreducible or $v$ same
meromorphic functions.
\end{corollary}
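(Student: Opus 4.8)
The plan is to derive this as a direct consequence of Theorem 2.4. Since $W(z)$ is an irreducible $v$-valued algebroid function and $v$ is prime, I would first consider the two exhaustive cases for the image $h\circ W(z)$: either it is irreducible, or it is reducible. If $h\circ W(z)$ is irreducible, there is nothing to prove and we are in the first alternative of the conclusion.

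\begin{proof}
Since $W(z)$ is irreducible, by Theorem 2.4 either $h\circ W(z)$ is irreducible, in which case the conclusion holds, or $h\circ W(z)$ is reducible. In the latter case, Theorem 2.4 shows that $h\circ W(z)$ splits into $n\,(\geq 1)$ copies of an $m$-valued irreducible algebroid function with $v=mn$. Since $v$ is prime, the factorization $v=mn$ forces either $m=1$ or $m=v$. If $m=v$, then $n=1$ and $h\circ W(z)$ is a single $v$-valued irreducible algebroid function, contradicting the assumption that it is reducible. Hence $m=1$ and $n=v$, so $h\circ W(z)$ splits into $v$ copies of a single-valued ($1$-valued) irreducible algebroid function, that is, $v$ identical meromorphic functions. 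This proves the corollary.
\end{proof}

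The main step to get right is the case analysis on the prime factorization $v=mn$. The only subtlety is recognizing that a $1$-valued irreducible algebroid function is precisely a meromorphic function, so that the conclusion ``$v$ same meromorphic functions'' is exactly the statement that $h\circ W(z)$ consists of $n=v$ identical copies of a single meromorphic function; this matches the degenerate situation described in item 3) of Definition 2.5, where $h_f\circ W(z)=\{f(z)\}$ yields $v$ equal meromorphic functions. I do not expect any genuine obstacle here: once Theorem 2.4 supplies the relation $v=mn$, primality of $v$ immediately collapses the possibilities to the two stated alternatives.
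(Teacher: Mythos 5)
Your proof is correct and is exactly the derivation the paper intends: the corollary is stated as an immediate consequence of Theorem 2.4, and your case analysis on the factorization $v=mn$ (primality forcing $m=1$, $n=v$, with $m=v$, $n=1$ ruled out by the reducibility assumption) is the whole argument, including the correct identification of a $1$-valued algebroid function with a meromorphic function. Nothing is missing.
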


Dealing with the addition of two $v$-valued algebroid functions, we
get the following result.
\begin{theorem}
Let $W(z)=\{(w_t(z),a)\}$ and $M(z)=\{(m_t(z),a)\}\in H_W$ be two
$v$-valued algebroid functions. Then
$$T(r,W+ M)\leq T(r,W)+T(r,M)+\log 2.$$
$$T(r,W\cdot M)\leq T(r,W)+T(r,M).$$
\end{theorem}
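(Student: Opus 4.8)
The plan is to reduce both inequalities to the familiar Nevanlinna estimates for $\log^+$, carried out branch by branch, once the branches of $W+M$ and $W\cdot M$ have been correctly matched.

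First I would use the hypothesis $M\in H_W$ to write $M=h\circ W$ for some algebroid mapping $h\in Y_W$, so that the branch of $M$ over a regular function element $(w_j(z),a)$ of $W$ is $m_j=h\circ w_j$. By Definition 2.7, taking $h_1$ to be the identity mapping $q[z,w]=w$ and $h_2=h$, the sum $W+M=(h_1+h_2)\circ W$ and the product $W\cdot M=(h_1\cdot h_2)\circ W$ are again $v$-valued algebroid functions by Theorem 2.3, and over each $(w_j(z),a)$ their corresponding branches are exactly $w_j(z)+m_j(z)$ and $w_j(z)m_j(z)$. This branch-matching is the crucial structural input: because $W$ and $M$ are built over one and the same base function, the combined functions carry only $v$ branches rather than $v^2$, so the symmetric, hence single-valued, functions $\sum_j\log^+|w_j+m_j|$ and $\sum_j\log^+|w_jm_j|$ are well defined on almost every circle $|z|=r$.

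Next, recall that for a $v$-valued algebroid function one has $T(r,W)=m(r,W)+N(r,W)$ with
$$m(r,W)=\frac{1}{2\pi v}\int_0^{2\pi}\sum_{j=1}^{v}\log^+|w_j(re^{i\theta})|\,d\theta.$$
For the proximity terms I would apply the elementary inequalities $\log^+|s+t|\le\log^+|s|+\log^+|t|+\log 2$ and $\log^+|st|\le\log^+|s|+\log^+|t|$ to each pair $w_j,m_j$ and integrate. In the additive case the constant contributes $\frac{1}{2\pi v}\int_0^{2\pi}v\log 2\,d\theta=\log 2$, which is exactly why the normalising factor $\tfrac1v$ yields the stated constant $\log 2$ and not $v\log 2$; thus $m(r,W+M)\le m(r,W)+m(r,M)+\log 2$ and $m(r,W\cdot M)\le m(r,W)+m(r,M)$.

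For the counting terms I would observe that any pole of a branch $w_j+m_j$ (respectively $w_jm_j$) must occur at a pole of $w_j$ or of $m_j$, with order not exceeding the sum of the two orders; hence $n(t,W+M)\le n(t,W)+n(t,M)$ and likewise for the product, giving $N(r,W+M)\le N(r,W)+N(r,M)$ and $N(r,W\cdot M)\le N(r,W)+N(r,M)$. Only an upper bound is required, so any cancellation of poles is harmless. Adding the proximity and counting estimates yields both inequalities. The main obstacle is not the analysis but the bookkeeping of the second paragraph: one must verify, via Definition 2.7 and Theorem 2.3, that forming $W+M$ and $W\cdot M$ creates no spurious branches and that the resulting branches genuinely are the term-by-term sums and products, since it is precisely this matching that keeps the count at $v$ branches and produces the clean constant $\log 2$.
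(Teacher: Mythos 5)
Your proposal is correct and follows essentially the same route as the paper: decompose $W$ and $M$ into $v$ matched single-valued branches (which is legitimate precisely because $M\in H_W$), apply the elementary $\log^+$ inequalities branchwise so the $\tfrac1v$ normalisation yields the constant $\log 2$, and bound the counting functions by noting that poles of $w_j+m_j$ and $w_jm_j$ arise only from poles of the constituents. Your explicit justification of the branch-matching via Definition 2.7 and Theorem 2.3 is a point the paper leaves implicit, but it is the same argument.
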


\begin{proof}
Suppose that $W(z)$ and $M(z)$ are decomposed to $v$ simple-valued
branch $\{W_t(z)\}$ and $\{M_t(z)\}$ in the cutting complex plane.
Then
$$m(r,W+ M)=\frac{1}{v}\sum_{1\leq t \leq v}m(r,W_t(z)+ M_t(z))$$
$$=\frac{1}{v}\sum_{1\leq t\leq v}\frac{1}{2\pi}\int^{2\pi}_{0}
\log^+|W_t(re^{i\theta})+ M_t(re^{i\theta})| d \theta $$
$$ \leq \frac{1}{v}(v\log 2+\sum^v_{t=1}\frac{1}{2\pi} \int^{2\pi}_{0} \log^+|W_t(re^{i\theta})|d \theta
+ \sum^v_{t=1}\frac{1}{2\pi} \int^{2\pi}_{0}
\log^+|M_t(re^{i\theta})|d \theta) $$
$$=m(r,W(z))+ m(r,M(z))+\log 2.$$
         $$N(r,W+ M)=\frac{1}{v}\int^r_{0}\frac{n(t,W+M)-n(0,W+M)}{t} d t
+\frac{n(0,W+M)}{v}\ln r  $$
$$\leq \frac{1}{v} \int^r_{0}\frac{n(t,W)-n(0,W)}{t} d t+
\frac{n(0,W)}{v}\ln r+\frac{1}{v}\int^r_{0}\frac{n(t,M)-n(0,M)}{t}d
t+\frac{n(0,M)}{v}\ln r$$
$$=N(r,W)+N(r,M).$$
          $$m(r,W\cdot M)=\frac{1}{v}\sum_{1\leq t\leq v}\frac{1}{2\pi}\int^{2\pi}_{0}\log^+|W_t(re^{i\theta})
M_t(re^{i\theta})| d \theta $$
 $$ \leq \frac{1}{v}(\sum^v_{t=1}\frac{1}{2\pi} \int^{2\pi}_{0} \log^+|W_t(re^{i\theta})|d \theta +
\sum^v_{t=1}\frac{1}{2\pi} \int^{2\pi}_{0}
\log^+|M_t(re^{i\theta})|d \theta) $$
$$=m(r,W(z))+ m(r,M(z)).$$
   $$N(r,W\cdot M)=\frac{1}{v}\int^r_{0}\frac{n(t,W\cdot M)-n(0,W\cdot M)}{t} d t
+\frac{n(0,W\cdot M)}{v}\ln r$$
$$ \leq \frac{1}{v} \int^r_{0}\frac{n(t,W)-n(0,W)}{t} d t+ \frac{n(0,W)}{v}\ln r+
\frac{1}{v}\int^r_{0}\frac{n(t,M)-n(0,M)}{t}d t)+\frac{n(0,M)}{v}\ln
r$$
$$=N(r,W)+N(r,M).$$
Hence we get the conclusions of Theorem 2.5.
\end{proof}

\section{ Nevanlinna's second main theorem
concerning small algebroid functions}

Since in $H_W$, elements in $X_W$ can make addition, subtraction,
multiplication, division and differential, we have conditions to
investigate the theorem concerning small algebroid functions.
Referring to the method in [2, 6], we firstly obtain the Second Main
Theorem concerning small algebroid functions.

\begin{lemma}
Suppose that $W(z)=\{(w_t(z),a)\}$ is a $v$-valued nonconstant
algebroid functin in $\{|z|<R\}$, and
 $\{a_j(z)\}^p_{j=0}\subset X_W$ are $q$ distinct small algebroid function with respect to $W(z)$.
Then for any $r\in(0,R)$, we have
$$|m(r,\sum^q_{j=1}\frac{1}{W(z)-a_j(z)})-\sum^q_{j=1}m(r,\frac{1}{W(z)-a_j(z)})|=S(r,W),$$
where $$S(r,W)=O(\log(rT(r,f))),~(r\rightarrow \infty,~r\not\in E
),$$ $E$ is a positive real number set of finite linear measure.
\end{lemma}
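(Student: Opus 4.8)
The plan is to prove the two one-sided inequalities
$$ m\Big(r,\sum_{j=1}^{q}g_j\Big)\le \sum_{j=1}^{q}m(r,g_j)+\log q \qquad\text{and}\qquad \sum_{j=1}^{q}m(r,g_j)\le m\Big(r,\sum_{j=1}^{q}g_j\Big)+S(r,W), $$
where I abbreviate $g_j:=\frac{1}{W-a_j}$, which is an algebroid function by Theorem 2.3 and Definition 2.7. The first inequality is immediate from $\log^+\big|\sum x_j\big|\le\sum\log^+|x_j|+\log q$ applied branch by branch, so all the content lies in the second (lower) inequality; subtracting the two and applying the triangle inequality then gives the asserted estimate. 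Throughout I work with the simple-valued branches $W_t=w_t$ and $a_{j,t}=h_j\circ w_t$ of $W$ and $a_j$ in the cut plane, exactly as in the proof of Theorem 2.5. Because $a_j\in H_W$, these branches are indexed consistently by $t$, so $W_t-a_{j,t}$ and hence $g_{j,t}$ are well defined, and $m(r,\cdot)=\frac1v\sum_{t=1}^{v}\frac1{2\pi}\int_0^{2\pi}\log^+|\cdot|\,d\theta$.

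First I would fix a branch $t$ and, on the circle $|z|=r$, partition the angles into measurable sets $E_1,\dots,E_q$ on which $a_{k,t}$ is the closest value, i.e. $|W_t-a_{k,t}|\le|W_t-a_{j,t}|$ for every $j$ (ties broken arbitrarily). The geometric heart is that on $E_k$, for $j\ne k$,
$$ |a_{j,t}-a_{k,t}|\le|W_t-a_{k,t}|+|W_t-a_{j,t}|\le 2\,|W_t-a_{j,t}|, $$
so that $|g_{j,t}|\le 2/|a_{j,t}-a_{k,t}|$; away from the closest value every term is controlled by the differences of the small functions. Writing $g_{k,t}=\big(\sum_j g_{j,t}\big)-\sum_{j\ne k}g_{j,t}$ and inserting this bound gives, on $E_k$,
$$ \log^+|g_{k,t}|\le \log^+\Big|\sum_{j}g_{j,t}\Big|+\sum_{j\ne k}\log^+\frac{2}{|a_{j,t}-a_{k,t}|}+O(1). $$
Since $a_{k,t}$ is closest on $E_k$ one also has $|g_{j,t}|\le|g_{k,t}|$ for all $j$, whence $\sum_{j\ne k}\log^+|g_{j,t}|\le\sum_{j\ne k}\log^+\frac{2}{|a_{j,t}-a_{k,t}|}$ there; adding this to the previous line shows that on $E_k$
$$ \sum_{j}\log^+|g_{j,t}|\le \log^+\Big|\sum_{j}g_{j,t}\Big|+2\sum_{j\ne k}\log^+\frac{2}{|a_{j,t}-a_{k,t}|}+O(1). $$

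Integrating over each $E_k$, summing over $k$ (the $E_k$ tile the circle, so the $\log^+\big|\sum_j g_{j,t}\big|$ term reassembles into a single integral over $[0,2\pi]$ with no loss of constant), then averaging over the $v$ branches and dividing by $2\pi$, I obtain
$$ \sum_{j=1}^{q}m(r,g_j)\le m\Big(r,\sum_{j=1}^{q}g_j\Big)+2\sum_{i\ne j}m\Big(r,\frac{1}{a_i-a_j}\Big)+O(1). $$
To finish, I use that the $a_j$ are distinct, so each $a_i-a_j\not\equiv0$ is a small algebroid function (closure of $X_W$ under subtraction, Definition 2.7), and hence $\frac{1}{a_i-a_j}\in X_W$ with $T\big(r,\tfrac{1}{a_i-a_j}\big)=T(r,a_i-a_j)+O(1)=o(T(r,W))$ by the first main theorem; therefore $m\big(r,\tfrac1{a_i-a_j}\big)=S(r,W)$ and the entire error term is $S(r,W)$. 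Combined with the trivial upper bound this yields the lemma. I expect the main obstacle to be the bookkeeping that keeps the error expressed purely through the small-function proximities $m\big(r,\tfrac1{a_i-a_j}\big)$ — specifically, setting up the branch-wise partition into dominant-term regions and checking that the multivalued (algebroid) structure is compatible with the subtractions $W-a_j$ through the $H_W$ framework — rather than any single difficult inequality.
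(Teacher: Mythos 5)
Your proposal is correct and takes essentially the same approach as the paper: both arguments cut $W$ and the $a_j$ into single-valued branches, obtain the upper bound trivially with a $\log q$ term, and prove the lower bound by showing that at each point every non-nearest term $1/(W_t-a_{j,t})$ is controlled by the mutual gaps of the branches $a_{j,t}$, whose $\log^+$-integrals are $S(r,W)$ because the $a_j$ are small. The only difference is bookkeeping — your partition of the circle into nearest-value sets $E_k$ together with the identity $g_{k,t}=F_t-\sum_{j\ne k}g_{j,t}$ replaces the paper's pointwise two-case analysis with the minimum-gap function $\delta_t(z)=\min_{j<u}|a_{j,t}(z)-a_{u,t}(z)|$ — and both versions rest on the same implicit assumption (left unaddressed in the paper as well) that distinct small algebroid functions never share a branch, so that no difference $a_{i,t}-a_{k,t}$ vanishes identically.
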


\begin{proof}
By using the tree $Y$ through all branch points of $W(z)$, we cut
$W(z)$ into $v$ singule-valued branch $\{W_t(z)\}^v_{t=1}$.
Accordingly, we cut every $a_j(z)$ into $v$ singule-valued branch
$\{a_{j,t}(z)\}^v_{t=1}$. For any $t=1,2,...,v$, set $$
F_t(z):=\sum^q_{j=1}\frac{1}{W_t(z)-a_{j,t}(z)}\eqno(3.1)$$ and
$$m(r,F_t)\leq \sum^q_{j=1}m(r,\frac{1}{W_t(z)-a_{j,t}(z)})+\log  q.\eqno(3.2)$$

In order to obtain the lower bound of $m(r,F_t)$, for any $z$, set
$$\delta_t(z):= \min_{1\leq j<u\leq
q}\{|a_{j,t}(z)-a_{u,t}(z)|\}\geq 0.$$ Note that $\delta_t(z)$ is
the function of $z$, by the uniqueness theorem, its zeros must be
isolated. Take arbitrary $z\in \{z; \delta_t(z)\ne 0\}$.

Case 1. If for any $j\in \{1,2,...,q\}$, we have $$
|W_t(z)-a_{j,t}(z)|\geq \frac{\delta_t(z)}{2q},$$ then
$$\sum^q_{j=1}\log ^+\frac{1}{|W_t(z)-a_{j,t}(z)|}\leq
q\log ^+\frac{2q}{\delta_t(z)}.\eqno(3.3)$$

Case 2. If there exists some $u\in \{1,2,...,q\}$ such that $$
|W_t(z)-a_{u,t}(z)|\leq \frac{\delta_t(z)}{2q}.\eqno(3.4)$$ Then
when $j\ne u$, we have
$$|W_t(z)-a_{j,t}(z)|\geq
|a_{u,t}(z)-a_{j,t}(z)|-|W_t(z)-a_{u,t}(z)|\geq
\delta_t(z)-\frac{\delta_t(z)}{2q}=\frac{2q-1}{2q}\delta_t(z).$$
Hence by (3.4) we get
$$\frac{1}{|W_t(z)-a_{j,t}(z)|}\leq
\frac{1}{2q-1}\frac{2q}{\delta_t(z)}\eqno(3.5)$$
$$< \frac{1}{2q-1}\frac{1}{|W_t(z)-a_{u,t}(z)|}.\eqno(3.6)$$
By (3.1) and (3.6) we get
$$|F_t(z)|\geq \frac{1}{|W_t(z)-a_{u,t}(z)|}-\sum_{j\ne
u}\frac{1}{|W_t(z)-a_{j,t}(z)|}$$
$$\geq
\frac{1}{|W_t(z)-a_{u,t}(z)|}-\frac{q-1}{2q-1}\frac{1}{|W_t(z)-a_{u,t}(z)|}>\frac{1}{2|W_t(z)-a_{u,t}(z)|}.$$
Then by (3.5) we get
 $$\log ^+|F_t(z)|>\log
^+\frac{1}{|W_t(z)-a_{u,t}(z)|}-\log2$$
$$=
\sum^q_{j=1}\log ^+\frac{1}{|W_t(z)-a_{j,t}(z)|}-
 \sum_{j\ne u}\log ^+\frac{1}{|W_t(z)-a_{j,t}(z)|}-\log  2$$
$$\geq
\sum^q_{j=1}\log ^+\frac{1}{|W_t(z)-a_{j,t}(z)|}-\sum_{j\ne u}\log
^+\frac{2q}{(2q-1)\delta_t(z)}-\log  2$$ $$> \sum^q_{j=1}\log
^+\frac{1}{|W_t(z)-a_{j,t}(z)|}-q\log ^+\frac{2q}{\delta_t(z)}-\log
2.$$
 Combining (3.3), in two cases we have
$$\log ^+|F_t(z)|>\sum^q_{j= 1}\log ^+\frac{1}{|W_t(z)-a_{j,t}(z)|}
-q\log ^+\frac{2q}{\delta_t(z)}-\log  2.\eqno(3.7)$$ By definition,
for any $z\in \{z; \delta_t(z)\ne 0\}$,
 there exists $j(z)\ne u(z)$ such that $\delta_t(z)=a_{j(z),t}(z)-a_{u(z),t}(z)$.
 Hence we get
$$\frac{1}{\delta_t(z)}=\frac{1}{|a_{j(z),t}(z)-a_{u(z),t}(z)|}
\leq \sum_{1\leq j<u\leq q}\frac{1}{|a_{j,t}(z)-a_{u,t}(z)|}.$$
So
$$
\frac{1}{2\pi}\int^{2\pi}_{0}\ln^+\frac{d\theta}{\delta_t(re^{i\theta})}
\leq \sum_{1\leq j<u\leq q}\frac{1}{2\pi}\int^{2\pi}_{0}
\ln^+\frac{d\theta}{|a_{j,t}(re^{i\theta})-a_{u,t}(re^{i\theta})|}+O(1)$$
$$=\sum m(r,a_{j,t}-a_{u,t})+O(1)\leq \sum T(r,a_{j,t}-a_{u,t})+O(1)\leq $$
$$=\sum [T(r,a_{j,t})+T(r,a_{u,t})]+O(1)=S(r,W).\eqno(3.8)$$

Write $z=re^{i\theta}$, integrating (3.7) and combining (3.8), we
get
$$m(r,F_t)>\sum^q_{j= 1}m(r,\frac{1}{|W_t(z)-a_{j,t(z)}|})+S(r,W).$$
Then by (3.2), we get
$$|m(r,F_t)-\sum^q_{j= 1}m(r,\frac{1}{|W_t(z)-a_{j,t(z)}|})|<S(r,W).$$
So
$$|m(r,F_t(z))-\sum^q_{j=1}m(r,\frac{1}{W(z)-a_{j,t(z)}})|$$
$$=|\frac{1}{v}\sum^v_{t=1}m(r,F_t)-\sum^q_{j=1}
[\frac{1}{v}\sum^v_{t=1}m(r,\frac{1}{|W_t(z)-a_{j,t(z)}|})]|$$
$$\leq \frac{1}{v}\sum^v_{t=1}|m(r,F_t)-\sum^q_{j=1}m(r,\frac{1}{|W_t(z)-a_{j,t(z)}|})|<S(r,W).$$
\end{proof}

\begin{lemma}
Suppose that $W(z)$ is a $v$-valued nonconstant algebroid functin in
$\{|z|<R\}$ and $n$ is a positive integer. Then $\frac{W^{(n)}}{W}$
is the differential polynomial of $\frac{W^\prime}{W}$.
\end{lemma}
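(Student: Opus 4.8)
The plan is to reduce the statement to the classical meromorphic logarithmic–derivative identity, carry that identity out branch by branch on the single–valued branches of $W$, and then lift the branch–level identity to an identity of algebroid functions by Vi\`ete's theorem.

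First I would cut $W(z)$ along the tree $Y$ through its branch points, exactly as in the proof of Lemma 3.1, to obtain the $v$ single-valued meromorphic branches $\{w_t(z)\}^v_{t=1}$. On each branch set $L_t:=w_t'/w_t$ and $Q_{n,t}:=w_t^{(n)}/w_t$. From $w_t'=L_t w_t$ one differentiates to get $w_t^{(n+1)}=(Q_{n,t}'+Q_{n,t}L_t)w_t$, so the branchwise recursion $Q_{1,t}=L_t$, $Q_{n+1,t}=Q_{n,t}'+Q_{n,t}L_t$ holds. An easy induction on $n$ then shows $Q_{n,t}=P_n\bigl(L_t,L_t',\dots,L_t^{(n-1)}\bigr)$, where $P_n$ is the fixed polynomial with nonnegative integer coefficients defined by
$$P_1=X_0,\qquad P_{n+1}=DP_n+X_0P_n,$$
$D$ being the formal derivation with $DX_k=X_{k+1}$ and the Leibniz rule (so $P_2=X_1+X_0^2$, $P_3=X_2+3X_0X_1+X_0^3$, and so on). The essential point is that $P_n$ depends only on $n$, neither on the branch index $t$ nor on $W$.

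Next I would read all the algebroid operations branch by branch. By Definition 2.5(4) the derivative $W'$ has branches $w_t'$, and iterating, $W^{(n)}$ has branches $w_t^{(n)}$; by Definition 2.6 the quotient $W'/W$ has branches $L_t$, and its successive derivatives have branches $L_t',L_t'',\dots$. Since $H_W$ is a linear space closed under multiplication, division and differentiation (Definition 2.6 together with Theorem 2.3), the differential polynomial $P_n(W'/W)$ is a well-defined element of $H_W$, and by the preceding remarks its branches are precisely $P_n\bigl(L_t,\dots,L_t^{(n-1)}\bigr)=Q_{n,t}=w_t^{(n)}/w_t$. These are exactly the branches of $W^{(n)}/W$. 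Because $P_n(W'/W)$ and $W^{(n)}/W$ share all $v$ branches, their elementary symmetric functions coincide, so by Vi\`ete's theorem their standard equations are identical; hence the two algebroid functions are identical in the sense of Definition 2.1, which is the asserted identity $W^{(n)}/W=P_n(W'/W)$.

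The main obstacle is not the classical induction, which is routine, but the verification that each operation entering the differential polynomial is genuinely performed branchwise and consistently inside $H_W$ — in particular that differentiating the algebroid function $W'/W$ yields an algebroid function whose branches are the derivatives of the branches $L_t$, and that forming products and sums in $H_W$ acts branchwise. This compatibility is exactly what the construction of $H_W$ in Section 2 (Theorem 2.3 and Definitions 2.5--2.6) guarantees; once it is invoked, the universal integer polynomial $P_n$ makes the branch-level identity lift automatically, and the nonconstancy of $W$ ensures each $w_t\not\equiv0$ so that the logarithmic derivatives are meaningful.
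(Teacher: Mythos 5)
Your proposal is correct, and its core is exactly the paper's argument: the recursion $\frac{W^{(t+1)}}{W}=\left(\frac{W^{(t)}}{W}\right)^\prime+\frac{W^{(t)}}{W}\cdot\frac{W^\prime}{W}$ together with induction on $n$. The only difference is presentational: the paper runs this induction directly on $W$ as an element of $H_W$ (taking for granted that the product and quotient rules hold there), whereas you run it on the single-valued branches and then lift the identity back to the algebroid level via the branchwise definition of the operations in $H_W$ and Vi\`ete's theorem, which merely makes explicit a justification the paper leaves implicit.
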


\begin{proof}
When $n=1$, the conclusion holds cleary.

Suppose that for $n=t$ we have
$$\frac{W^{(t)}}{W}=P(\frac{W^\prime}{W}),$$
where $P(\frac{W^\prime}{W})$ is the differential polynomial of
$\frac{W^\prime}{W}$. Since
$$(\frac{W^{(t)}}{W})^\prime= \frac{W^{(t+1)}}{W}-\frac{W^{(t)}}{W}\cdot \frac{W^\prime}{W},$$

$$ \frac{W^{(t+1)}}{W}=(\frac{W^{(t)}}{W})^\prime+\frac{W^{(t)}}{W}\cdot \frac{W^\prime}{W}$$
$$=[P(\frac{W^\prime}{W} )]^\prime+P(\frac{W^\prime}{W})\cdot \frac{W^\prime}{W}$$
is the differential polynomial of $\frac{W^\prime}{W}$.
\end{proof}

\begin{lemma}
Let $f_1,f_2,...,f_k,g\in H_W$. Then
$$ W(f_1,f_2,...,f_k):=
\left|\begin{array}{llll}
f_1&f_2&\cdots&f_k\\
f^\prime_1&f^\prime_2&\cdots&f^\prime_k\\
\cdots&\cdots&&\\
f^{(k-1)}_1&f^{(k-1)}_2&\cdots&f^{(k-1)}_k\\
\end{array}
\right| = g^kW(\frac{f_1}{g},\frac{f_2}{g},...,\frac{f_k}{g}).$$
\end{lemma}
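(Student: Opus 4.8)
The plan is to reduce the identity to the classical Wronskian transformation rule by exhibiting the relevant Wronskian matrix as a product of two matrices. Every operation appearing in the statement --- multiplication, division, and differentiation --- is defined branch-by-branch on regular function elements (Definitions 2.5 and 2.6), and on each such element it coincides with the corresponding operation on meromorphic germs. Consequently it suffices to verify the identity on an arbitrary regular function element, where all entries are honest meromorphic functions; the global identity in $H_W$ then follows by the same analytic continuation used in the construction of $h\circ W$ in Theorem 2.3.

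First I would set $h_i := f_i/g \in H_W$, which is well defined since $H_W$ is closed under division, so that $f_i = g\,h_i$ for $i=1,\dots,k$. The key computational input is the Leibniz rule
$$f_i^{(j)} = (g\,h_i)^{(j)} = \sum_{l=0}^{j}\binom{j}{l}\,g^{(j-l)}\,h_i^{(l)},$$
which holds on each regular function element because there multiplication and differentiation reduce to the ordinary operations on holomorphic germs, and hence extends to an identity in $H_W$.

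Next I would read off the resulting matrix factorization. Let $A$ be the $k\times k$ lower-triangular matrix with entries $A_{jl}=\binom{j}{l}g^{(j-l)}$ for $0\le l\le j\le k-1$ and $A_{jl}=0$ otherwise, and let $B$ be the matrix with entries $B_{li}=h_i^{(l)}$, so that $\det B = W(h_1,\dots,h_k)$. The Leibniz expansion says precisely that the Wronskian matrix $C$ of $f_1,\dots,f_k$, whose entries are $C_{ji}=f_i^{(j)}$, satisfies $C = AB$. Since the diagonal entries of $A$ are all $A_{jj}=g$, we have $\det A = g^{k}$, and multiplicativity of the determinant gives
$$W(f_1,\dots,f_k) = \det C = \det A\cdot\det B = g^{k}\,W\!\left(\frac{f_1}{g},\dots,\frac{f_k}{g}\right).$$

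The only point requiring care is that these are algebroid-function identities rather than identities between single-valued functions, so that the product $AB$ and the determinant expansions must be interpreted inside $H_W$. I do not expect this to be a genuine obstacle: both the Leibniz rule and the multiplicativity of the determinant are polynomial identities in the entries and their derivatives, hence hold branch-by-branch on every regular function element, where the entries are ordinary meromorphic germs. The identity of algebroid functions then propagates globally exactly as in Theorem 2.3. (An induction on $k$ using column operations would also work, but the factorization $C=AB$ is the cleaner route and isolates the single nontrivial ingredient, namely the branch-wise validity of the Leibniz rule.)
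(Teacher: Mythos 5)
Your proof is correct, but it follows a genuinely different route from the paper's. The paper proceeds by induction on $k$: the case $k=2$ is a direct computation, and in the step from $k$ to $k+1$ it expands $g^{k+1}W\!\left(\frac{f_1}{g},\dots,\frac{f_{k+1}}{g}\right)$ along the last row, applies the induction hypothesis to the resulting $k\times k$ Wronskian minors, invokes the Leibniz rule for $\left(\frac{f_n}{g}\right)^{(k)}$ (written as the derivative of $f_n\cdot\frac{1}{g}$), and interchanges the two sums so that every term with $j<k$ becomes the expansion of a determinant with a repeated row, hence vanishes; only the $j=k$ term survives, giving $W(f_1,\dots,f_{k+1})$. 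Your factorization $C=AB$, with $A$ the lower-triangular Leibniz matrix (so $\det A=g^{k}$) and $B$ the Wronskian matrix of the $\frac{f_i}{g}$, packages exactly this combinatorics into a single application of $\det(AB)=\det A\cdot\det B$: it avoids the induction and the cofactor bookkeeping entirely, and it isolates the two real inputs, namely the Leibniz rule and the multiplicativity of the determinant, both of which are polynomial identities valid over any commutative ring and hence branch-wise on regular function elements. Your explicit remark that all operations in $H_W$ reduce branch-by-branch to operations on meromorphic germs and then propagate by the analytic continuation argument of Theorem 2.3 is a point the paper leaves entirely implicit, so on this score you are more careful than the source; conversely, the paper's induction uses nothing beyond cofactor expansion and the vanishing of determinants with repeated rows, so it is marginally more self-contained. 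Both arguments tacitly assume $g\not\equiv 0$ so that the divisions $\frac{f_i}{g}$ are defined.
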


\begin{proof}
(1) When $k=2$, we have
$$g^2W(\frac{f_1}{g},\frac{f_2}{g})=g^2\left|\begin{array}{ll}
\frac{f_1}{g}&\frac{f_2}{g}\\
\\
(\frac{f_1}{g})^\prime&(\frac{f_2}{g})^\prime\\
\end{array}
\right| =g^2\left|\begin{array}{ll}
\frac{f_1}{g}&\frac{f_2}{g}\\
\\
\frac{f^\prime_1g-f_1g^\prime}{g^2}&\frac{f^\prime_2g-f_2g^\prime}{g^2}\\
\end{array}
\right|$$
$$=g^2[
 \frac{f_1f^\prime_2g-f_1f_2g^\prime}{g^3}- \frac{f_2f^\prime_1g-f_2f_1g^\prime}{g^3}]=
f_1f^\prime_2-f_2f^\prime_1=W(f_1,f_2).$$

(2) Suppose that for positive integer $k$, we have
$$g^kW(\frac{f_1}{g},\frac{f_2}{g},...,\frac{f_k}{g})=W(f_1,f_2,...,f_k).$$
Then for $k+1$, we have
$$
g^{k+1}W(\frac{f_1}{g},\frac{f_2}{g},...,\frac{f_k}{g},\frac{f_{k+1}}{g})=
g^{k+1}\left|\begin{array}{lllll}
\frac{f_1}{g}&\frac{f_2}{g}&\cdots&\frac{f_k}{g}&\frac{f_{k+1}}{g}\\
(\frac{f_1}{g})^\prime&(\frac{f_2}{g})^\prime&\cdots&(\frac{f_k}{g})^\prime&(\frac{f_{k+1}}{g})^\prime\\
\cdots&\cdots&&\\
(\frac{f_1}{g})^{(k-1)}&(\frac{f_2}{g})^{(k-1)}&\cdots&(\frac{f_k}{g})^{(k-1)}&(\frac{f_{k+1}}{g})^{(k-1)}\\
(\frac{f_1}{g})^{(k)}&(\frac{f_2}{g})^{(k)}&\cdots&(\frac{f_k}{g})^{(k)}&(\frac{f_{k+1}}{g})^{(k)}\\
\end{array}
\right|$$
$$=g^{k+1}\sum^{k+1}_{n=1}(-1)^{k+1-n}(\frac{f_n}{g})^{(k)}
W(\frac{f_1}{g},...,\frac{f_{n-1}}{g},\frac{f_{n+1}}{g},...,\frac{f_{k+1}}{g})$$
$$=g\sum^{k+1}_{n=1}(-1)^{k+1-n}(\frac{f_n}{g})^{(k)}W(f_1,...,f_{n-1},f_{n+1},...,f_{k+1})$$
$$=g\sum^{k+1}_{n=1}(-1)^{k+1-n}
[\sum^k_{j=0}C^k_{j}f^{(j)}_n
(\frac{1}{g})^{(k-j)}]W(f_1,...,f_{n-1},f_{n+1},...,f_{k+1})$$

$$=g\sum^k_{j=0}C^k_{j}(\frac{1}{g})^{(k-j)}[\sum^{k+1}_{n=1}(-1)^{k+1-n}
f^{(j)}_n W(f_1,...,f_{n-1},f_{n+1},...,f_{k+1})]$$

$$=g\sum^k_{j=0}C^k_{j}(\frac{1}{g})^{(k-j)}
\left|\begin{array}{lllll}
f_1&f_2&\cdots&f_k&f_{k+1}\\
(f_1)^\prime&(f_2)^\prime&\cdots&(f_k)^\prime&(f_{k+1})^\prime\\
\cdots&\cdots&&\\
(f_1)^{(k-1)}&(f_2)^{(k-1)}&\cdots&(f_k)^{(k-1)}&(f_{k+1})^{(k-1)}\\
(f_1)^{(j)}&(f_2)^{(j)}&\cdots&(f_k)^{(j)}&(f_{k+1})^{(j)}\\
\end{array}
\right|$$
$$=gC^k_k(\frac{1}{g})^{(k-k)}
\left|\begin{array}{lllll}
f_1&f_2&\cdots&f_k&f_{k+1}\\
(f_1)^\prime&(f_2)^\prime&\cdots&(f_k)^\prime&(f_{k+1})^\prime\\
\cdots&\cdots&&\\
(f_1)^{(k-1)}&(f_2)^{(k-1)}&\cdots&(f_k)^{(k-1)}&(f_{k+1})^{(k-1)}\\
(f_1)^{(k)}&(f_2)^{(k)}&\cdots&(f_k)^{(k)}&(f_{k+1})^{(k)}\\
\end{array}
\right|$$
$$=W(f_1,...,f_{n-1},f_n,f_{n+1},...,f_{k+1}).$$
So the conclusion of Lemma 3.3 holds.
\end{proof}

\begin{lemma}
Suppose that  $A_q=\{a_j:=a_j(z)\}^q_{j=1}\subset X_W$ are $q\geq 1
$ distinct small algebroid fuctions. Let $L(s,A_q)$ denote the
vector space spanned by finitely many
$a^{p_1}_1a^{p_2}_2...a^{p_q}_q$, where integer $p_j\geq
0$($j=1,2,...,q$) and $\sum^q_{j=1}p_j=s(\geq 1)$. Let $\dim
L(s,A_q)$ denote the dimension of the vector space $L(s,A_q)$. Then
for any $\epsilon>0$, there exists $s\geq 1$ such that $$
 \frac{\dim L(s+1,A_q)}{\dim L(s,A_q)}<1+\epsilon.$$
\end{lemma}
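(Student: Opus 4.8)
The plan is to compare the growth rate in $s$ of $d_s := \dim L(s,A_q)$ against an explicit polynomial upper bound and then conclude by a short contradiction argument. First I would observe that, by definition, $L(s,A_q)$ is spanned by the monomials $a_1^{p_1}a_2^{p_2}\cdots a_q^{p_q}$ with $p_j\geq 0$ and $\sum_{j=1}^q p_j = s$. The number of such monomials equals the number of ways of writing $s$ as an ordered sum of $q$ nonnegative integers, namely $\binom{s+q-1}{q-1}$. Hence, \emph{regardless} of any algebraic relations among the $a_j$, one has the spanning-set bound
$$d_s \leq \binom{s+q-1}{q-1},$$
and the right-hand side is a polynomial in $s$ of degree $q-1$; in particular $d_s = O(s^{q-1})$ as $s\to\infty$.

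Next I would argue by contradiction. Suppose the assertion fails, so that for \emph{every} $s\geq 1$ we have $d_{s+1}/d_s \geq 1+\epsilon$. Since $q\geq 1$ and the $a_j$ are distinct, at least one of them, say $a_1$, is not identically zero, so $a_1^s$ is a nonzero element of $L(s,A_q)$ and therefore $d_s\geq 1$ for all $s$; in particular the ratios are well defined and $d_1\geq 1$. Iterating the assumed inequality then gives
$$d_s \geq (1+\epsilon)^{s-1} d_1 \geq (1+\epsilon)^{s-1}$$
for all $s\geq 1$, forcing $d_s$ to grow at least geometrically in $s$.

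The two estimates are incompatible. The polynomial bound $d_s \leq \binom{s+q-1}{q-1} = O(s^{q-1})$ cannot dominate the geometric lower bound $(1+\epsilon)^{s-1}$ for large $s$, because $(1+\epsilon)^{s-1}/s^{q-1}\to\infty$ as $s\to\infty$. This contradiction shows that $d_{s+1}/d_s \geq 1+\epsilon$ cannot hold for all $s$, so there exists at least one $s\geq 1$ with
$$\frac{\dim L(s+1,A_q)}{\dim L(s,A_q)} < 1+\epsilon,$$
which is exactly the claim.

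I do not expect a genuine obstacle here, since the argument is purely a comparison of a fixed polynomial upper bound against a hypothetical geometric lower bound. The only two points needing care are verifying that $L(s,A_q)$ is spanned by precisely those $\binom{s+q-1}{q-1}$ monomials (immediate from the definition of $L(s,A_q)$) and checking $d_1\geq 1$ so that the iteration is not vacuous. It is worth emphasizing that the upper bound does \emph{not} require the $a_j$ to be algebraically independent: the spanning set is finite with the stated cardinality whether or not those monomials are linearly independent, so the dimension can only be smaller, which is all that is needed.
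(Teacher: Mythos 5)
Your proof is correct and follows essentially the same strategy as the paper's: bound $\dim L(s,A_q)$ above by the number of degree-$s$ monomials, which grows polynomially in $s$, and then contradict this with the geometric growth $(1+\epsilon)^{s-1}$ forced by assuming the ratio is always at least $1+\epsilon$. The only difference is presentational: you invoke the standard stars-and-bars count $\binom{s+q-1}{q-1}=O(s^{q-1})$ directly, whereas the paper establishes the same count $C^{s+1}_{q+s}$ and the cruder bound $q(q+1)s^{q}$ via two separate inductions before running the identical contradiction.
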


\begin{proof}
Let $G(s,A_q)$ denote the set of
 the form $a^{p_1}_1a^{p_2}_2...a^{p_q}_q$,
and let $\#(s,A_q)$ denote the number of distinct element of
$G(s,A_q)$.

Using mathematical induction, we firstly prove that for any
$q>0,s>0$, we have
$$\#(s+1,A_q)= C^{s+1}_{q+s}.\eqno(3.9)$$

When $q=1$, for any integer $s\geq 1$,
$\#(s+1,A_1)=1=C^{s+1}_{1+s}$. (3.9) holds.

When $q=2$, for any integer $s\geq 1$,
$\#(s+1,A_2)=s+2=C^{s+1}_{2+s}$. (3.9) holds.

Suppose that for $q=k$ and any integer $s\geq 1$, we have
$\#(s+1,A_k)=C^{s+1}_{k+s}$.  Then for $q=k+1$, we have
 $$\#(s+1,A_{k+1})=\#(s+1,A_k)+\#(s,A_k)\cdot \#(1,A_1)+\#(s-1,A_k)\cdot \#(2,A_1)$$
$$+...+\#(1,A_k)\cdot \#(s,A_1)+\#(s+1,A_1)$$
$$=\#(s+1,A_k)+\#(s,A_k)+\#(s-1,A_k)+...+\#(2,A_k)+\#(1,A_k)+1$$
 $$=C^{s+1}_{k+s}+C^{s}_{k+s-1}+C^{s-1}_{k+s-2}+...+C^{2}_{k+1}+C^1_{k}+1
=1+\sum^s_{j=0}C^{j+1}_{k+j}.$$

Since $C^{j+1}_{k+j+1} = C^{j+1}_{k+j}+C^{j}_{k+j}$,  $
C^{j+1}_{k+j}=C^{j+1}_{k+j+1}- C^{j}_{k+j}$. Substituting it into
the above equality, we get
 $$\#(s+1,A_{k+1})=1+\sum^s_{j=0}(C^{j+1}_{k+j+1}- C^{j}_{k+j}).$$
 $$=1+(C^{s+1}_{k+s+1}- C^{s}_{k+s})
                          +(C^{s}_{k+s}- C^{s-1}_{k+s-1})
                          +(C^{s-1}_{k+s-1}- C^{s-2}_{k+s-2})
                          +(C^{s-2}_{k+s-2}- C^{s-3}_{k+s-3})+...$$
 $$+(C^{4}_{k+4}- C^{3}_{k+3})+(C^{3}_{k+3}- C^{2}_{k+2})
+(C^{2}_{k+2}- C^{1}_{k+1})+(C^{1}_{k+1}- C^{0}_{k})$$
 $$=1+C^{s+1}_{k+s+1}- C^{0}_{k}=C^{s+1}_{k+s+1}.$$

Then we prove that for any $q>0, s>0$, we have $$C^{s+1}_{q+s}\leq
q(q+1)s^q .\eqno(3.10)$$

When $q=1$, for any integer $s\geq 1$, $C^{s+1}_{1+s}=1\leq 2s$.
(3.10) holds.

When $q=2$, for any integer $s\geq 1$, $C^{s+1}_{2+s}=s+2\leq 6s^2
$. (3.10) holds.

Suppose that for $q=k$ and any integer $s\geq 1$, we have
$C^{s+1}_{k+s}\leq k(k+1)s^k $. Then for $q=k+1$, we get
$$C^{s+1}_{k+s+1}=C^{s+1}_{k+s}\frac{k+s+1}{k} \leq  k(k+1)s^k \frac{k+s+1}{k}$$
$$= (k+1)s^k (k+s+1)=(k+1)(k+2)s^{k+1}\frac{k+s+1}{ks+2s}\leq (k+1)(k+2)s^{k+1}.$$
This shows that (3.10) holds. Combining (3.9), for any $q>0,s>0$ we
have
$$\dim L(s+1,A_q)\leq \#(s+1,A_q)= C^{s+1}_{q+s}\leq q(q+1)s^q.\eqno(3.11)$$

Finally if Lemma 3.4 doesn't hold, then for any integer $s\geq 1$,
we have $$ \dim L(s+1,A_q)\geq (1+\epsilon)\dim L(s,A_q).$$ Hence
$$\dim L(s+1,A_q)\geq (1+\epsilon)\dim L(s,A_q)\geq...\geq
(1+\epsilon)^s \dim L(1,A_q) \geq (1+\epsilon)^s.$$
 Combining (3.11), we get
 $$ (1+\epsilon)^s\leq q(q+1)s^q.\eqno(3.12)$$
But $$\lim_{s\rightarrow\infty}\frac{(1+\epsilon)^s}{ s^q}=\infty.$$
This contradicts (3.12).
\end{proof}

\begin{theorem} (Nevanlinna's Second Main Theorem)

Suppose that $W(z)=\{(w_{j}(z),a)\}$ is a $v$-valued nonconstant
algebroid function in the complex plane $C$. $\{a_j\}^q_{j=1}\subset
X_W$ are $q\geq 2 $ distinct small algebroid functions of $W(z)$.
Then for any $\epsilon\in(0,1)$ and $r>0$, we have
$$m(r,W)+\sum^q_{j=1}m(r,\frac{1}{W(z)-a_j})=(2+\epsilon)T(r,W)+2N_x(r,W)+S(r,W).\eqno(3.13)$$
Its equivalent form is
 $$ (q-1-\epsilon)T(r,W)\leq N(r,W)+\sum^q_{j=1}
N(r,\frac{1}{W-a_j})+2N_x(r,W)+S(r,W)\eqno(3.14)$$
 or
$$(q-4v+3-\epsilon)T(r,W)\leq N(r,W)+\sum^q_{j=1}
N(r,\frac{1}{W-a_j})+S(r,W).\eqno(3.15)$$
\end{theorem}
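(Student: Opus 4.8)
The plan is to regard the three inequalities as one package in which (3.14) is the substantive statement and (3.13), (3.15) follow by bookkeeping. Since every $a_j$ is small, $T(r,W-a_j)=T(r,W)+S(r,W)$, so the First Main Theorem gives $m(r,1/(W-a_j))=T(r,W)-N(r,1/(W-a_j))+S(r,W)$ and $m(r,W)=T(r,W)-N(r,W)+O(1)$; summing these over $j$ turns (3.13) into (3.14) term by term, the effective number of targets being $q+1$ once $\infty$ is counted, which is why the coefficient is $q-1$. To pass from (3.14) to (3.15) I would invoke the standard ramification estimate $N_x(r,W)\le (2v-2)T(r,W)+S(r,W)$ for a $v$-valued algebroid function (the branch divisor has degree governed by $2v-2$, as in \cite{a7}, \cite{a8}) and absorb $2N_x(r,W)$ into the left-hand side, which shifts the coefficient from $q-1-\epsilon$ to $q-4v+3-\epsilon$. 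Thus it suffices to prove the proximity inequality (3.13).

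For the core estimate I would follow the method of \cite{a6} (Steinmetz's technique adapted to algebroid functions), treating $\infty$ on the same footing as the finite targets so that the $q+1$ targets should produce the principal term $2T(r,W)$, with the loss $\epsilon$ coming entirely from a dimension count. First fix $\epsilon$ and apply Lemma 3.4 to choose $s\ge 1$ with $\dim L(s+1,A_q)/\dim L(s,A_q)<1+\epsilon'$, where $\epsilon'$ is chosen in advance as a function of $\epsilon$, $q$ and $v$; crucially $s$, and with it a basis $\{c_1,\dots,c_n\}$ of $L(s,A_q)$, is fixed before letting $r\to\infty$, so that all the resulting small-function terms genuinely collapse to $S(r,W)$. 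As in the proof of Lemma 3.1 I would cut $W$ into its $v$ single-valued branches $\{W_t\}_{t=1}^{v}$ along the tree $Y$ through the branch points, carry out the estimate on each branch, and average over $t$ at the end.

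On a fixed branch I would form a generalized Wronskian determinant built from $W_t$ and the products $c_k$, using the algebraic identity of Lemma 3.3 to factor out a convenient common denominator. The analytic engine is the logarithmic derivative lemma $m(r,W'/W)=S(r,W)$, which by Lemma 3.2 controls every $m(r,W^{(n)}/W)$ and hence the proximity function of the normalized Wronskian, giving $m(r,D_t/(\text{product}))=S(r,W)$ uniformly in $t$. Separating the sum from the logarithm by Lemma 3.1 and averaging over the branches, I expect to bound $m(r,W)+\sum_{j=1}^q m(r,1/(W-a_j))$ by the principal term $2T(r,W)+2N_x(r,W)+S(r,W)$ plus an error proportional to $\big(\dim L(s+1,A_q)-\dim L(s,A_q)\big)/\dim L(s,A_q)$ times $T(r,W)$; by the choice of $s$ this error is at most $\epsilon T(r,W)$, which yields (3.13).

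The step I expect to be the main obstacle is the construction of the Wronskian and the verification that its proximity function is $S(r,W)$ while its divisor reproduces exactly $N(r,W)+\sum_j N(r,1/(W-a_j))$ together with the ramification term: keeping multiplicities straight, and in particular correctly isolating the branch contribution $N_x(r,W)$ when the single-valued branch estimates are reassembled into an estimate for the $v$-valued function, is the delicate point that separates the algebroid case from the classical meromorphic one. The dimension-ratio input from Lemma 3.4 and the logarithmic-derivative input from Lemmas 3.2 and 3.3 are then exactly what is needed to close the argument.
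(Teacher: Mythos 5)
Your framework is the right one (Lemma 3.4 for the dimension ratio, Lemma 3.3, the logarithmic derivative lemma, Lemma 3.1 to separate the sum from the logarithm, the First Main Theorem to pass between (3.13) and (3.14), and the ramification bound $N_x(r,W)\le (2v-2)T(r,W)+S(r,W)$ to get (3.15)), but there is a genuine gap exactly at the step you flag as "the main obstacle," and it is not a technicality: you never describe the mechanism that makes Steinmetz's method work. The paper's auxiliary function is
$$P(W):=W\bigl(B_1,\dots,B_k,\;Wb_1,\dots,Wb_n\bigr),$$
a Wronskian built from a basis $\{B_i\}_{i=1}^{k}$ of $L(s+1,A_q)$ \emph{together with} $W$ times a basis $\{b_j\}_{j=1}^{n}$ of $L(s,A_q)$; your proposal uses only a basis $\{c_k\}$ of $L(s,A_q)$ and the function $W$ itself. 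The whole point of including the $B_i$ is the invariance $P(W-a)=P(W)$ for every linear combination $a$ of the $a_j$: each product $ab_j$ lies in $L(s+1,A_q)$, hence is a linear combination of $B_1,\dots,B_k$, so subtracting it does not change the determinant. Combined with the representation $P(W)=W^nQ(W'/W)$ (from (3.17) and Lemma 3.2), this invariance yields $P(W)=(W-a_j)^nQ_j$ with $Q_j$ a differential polynomial in $(W-a_j)'/(W-a_j)$, hence the pointwise identity $1/|W-a_j|=|Q_j|^{1/n}/|P(W)|^{1/n}$ in which $m(r,Q_j)=S(r,W)$. That is what lets the single function $P(W)$ dominate all $q$ proximity functions simultaneously; without the $L(s+1,A_q)$ block and this invariance, your branch-wise Wronskian estimate has no way to close.

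A second, related error: you expect the Wronskian's divisor to "reproduce exactly $N(r,W)+\sum_j N(r,1/(W-a_j))$." It does not, and it need not. In the paper those counting functions never come from $P(W)$ at all; they enter only at the very end, via the First Main Theorem $m(r,1/(W-a_j))\le T(r,W)-N(r,1/(W-a_j))+S(r,W)$, when (3.13) is converted into (3.14). What one actually proves about $P(W)$ is only the one-sided pole estimate $N(r,P(W))\le (n+k)N(r,W)+(n+k-1)N_x(r,W)+S(r,W)$ (this is where Lemma 3.3 and the analysis of algebraic function elements are used, and where the factor $2N_x$ originates since $(n+k-1)/n<2$), while the zero divisor $N(r,1/P(W))$ appears with a favorable sign and is simply discarded. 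Finally, note that the paper constructs $P(W)$ globally as an element of $H_W$ — this is precisely why Section 2 sets up closure of $H_W$ under arithmetic and differentiation — rather than branch by branch; the cutting into single-valued branches is confined to the proof of Lemma 3.1, which avoids the delicate issue of making sense of derivatives of individual branches near the branch points.
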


\begin{proof}
Let $A_q=\{a_1,a_2,...,a_q\}$ and $L(s,A_q)$ denote the vector space
spanned by finitely many $a^{n_1}_1a^{n_2}_2...a^{n_q}_q$, where
$n_j\geq 0$($j=1,2,...,q$) and $\sum^q_{j=1}n_j=s$. For given $s$,
set $\dim L(s,A_q)=n$. Let $b_1,b_2,...,b_n$ denote a basis of
$L(s,A_q)$. Set $\dim L(s+1,A_q)=k$. Let $B_1,B_2,...,B_k$ denote a
basis of $L(s+1,A_q)$. By Lemma 3.4, for any $\epsilon>0$, there
exists some $s$ such that $$ 1\leq \frac{k}{n}<1+\epsilon.
\eqno(3.16)$$

Let $$ P(W):=W(B_1,B_2,...,B_k,Wb_1,Wb_2,...,Wb_n).$$ Since
$B_1,B_2,...,B_k,Wb_1,Wb_2,...,Wb_n$ are linearly independent,
$P(W)\not\equiv 0$. By the definition of the Wronskian determinant,
we get
$$ P(W)=\sum C_p(z)\prod^{n+k-1}_{j=0}(W^{(j)})^{p_j}
=W^n\sum
C_p(z)\prod^{n+k-1}_{j=0}(\frac{W^{(j)}}{W})^{p_j}.\eqno(3.17)$$
Since $m(r,W^\prime/W)=S(r,W)$, we get $$m(r,P(W)\leq
nm(r,W)+S(r,W).\eqno(3.18)$$

By Lemma 3.3, we get
$$ W(B_1,...,B_k,Wb_1,...,Wb_n)=P(W)
=W^{n+k}W(\frac{B_1}{W},...,\frac{B_k}{W},b_1,...,b_n).$$

(i) Suppose that $(q(z),z_0)$ is a meromorphic fuction element or
multivalent algebraic function element of $W(z)$. If $z_0$ is a
$\tau$-fold pole of $q(z)$, by the right of the above equality, it
can be see that outside the poles of the small algebroid functions
$\{B_i\}$,$\{b_j\}$, the order of pole of $P(W)$ at $(q(z),z_0)$ is
$(n+k)\tau$.

If $z_0$ is a zero of $q(z)$,
 by the left of the above equality, it
can be see that outside the poles of the small algebroid functions
$\{B_i\}$,$\{b_j\}$, $(q(z),z_0)$ isn't the pole of $P(W)$.

(ii) For any $1\leq t\leq k$, set
$$W_t(B_1,...,B_k,Wb_1,...,Wb_n):=W(B_1,...,B_{t-1},B_{t+1},...,B_k,Wb_1,...,Wb_n).$$
When $k< t\leq n+k$, set
$$W_t(B_1,...,B_k,Wb_1,...,Wb_n):=W(B_1,...,B_k,Wb_1,...,Wb_{t-1},Wb_{t+1},...,Wb_n).$$
Suppose that $(q(z),z_0)$ is any $\lambda$-sheeted algebraic
function element of $W(z)$ and $z_0$ isn't the pole of $q(z)$. Then
$z_0$ is at most the pole of $q^\prime(z)$ with the order
$\lambda-1$. By Lemma 3.3 we get
$$ P(W)=
    \sum^k_{t=1}[(-1)^{t+1}B_t\cdot W_t(B^\prime_1,...,B^\prime_k,(Wb_1)^\prime,...,(Wb_n)^\prime)]$$
$$+\sum^{k+n}_{t=k+1}[(-1)^{t+1}Wb_t\cdot  W_t(B^\prime_1,...,
B^\prime_k,(Wb_1)^\prime,...,(Wb_n)^\prime)]$$
$$=\sum^k_{t=1}[(-1)^{t+1}B_t\cdot  (W^\prime b_t+Wb^\prime_t)^{n+k-1}
 W_t(\frac{B^\prime_1}{(Wb_t)^\prime},...,\frac{B^\prime_k}{(Wb_t)^\prime},
\frac{(Wb_1)^\prime}{(Wb_t)^\prime},...,\frac{(Wb_k)^\prime}{(Wb_t)^\prime})$$
$$+\sum^{k+n}_{t=k+1}[(-1)^{t+1}Wb_t\cdot
  (W^\prime b_t+Wb^\prime_t)^{n+k-1}
 W_t(\frac{B^\prime_1}{(Wb_t)^\prime},...,\frac{B^\prime_k}{(Wb_t)^\prime}
 ,\frac{(Wb_1)^\prime}{(Wb_t)^\prime},...,\frac{(Wb_k)^\prime}{(Wb_t)^\prime}).$$
Hence outside the poles of the small algebroid functions
$\{B_i\}$,$\{b_j\}$, the order of pole of $P(W)$ at $(q(z),z_0)$ is
at most $(\lambda-1)(n+k-1)$.

Combining (i) and (ii), we get
         $$N(r,P(W))\leq (n+k)N(r,W)+(n+k-1)N_x(r,W)+  S(r,W).$$
By (3.18) we get
$$ T(r,P(W))\leq
nT(r,W)+kN(r,W)+(n+k-1)N_x(r,W)+S(r,W).\eqno(3.19)$$
 Suppose that $a$ is a linear combination of $\{a_j\}$, then
 $$P(W-a)=W(B_1,B_2,...,B_k,Wb_1-ab_1,Wb_2-ab_2,...,Wb_n-ab_n)$$
$$=W(B_1,B_2,...,B_k,Wb_1,Wb_2,...,Wb_n)\pm \sum W(B_1,B_2,...,B_k,...),$$
where the element "..." behind $B_k$ in $\sum
W(B_1,B_2,...,B_k,...)$ consists of $ab_j$. But $ab_j$ and
$B_1,B_2,...,B_k$ are linearly dependent, so we get $\sum
W(B_1,B_2,...,B_k,...)=0$. Hence we get $$ P(W-a)=P(W).\eqno(3.20)$$
By (3.17) and Lemma 3.2, we get
$$P(W)=W^n\cdot Q(\frac{W^\prime}{W}),\eqno(3.21)$$
where $Q(\frac{W^\prime}{W})$ is the differential polynomial of
$\frac{W^\prime}{W}$. Set
$$u_j:=W-a_j,\hspace{0.3cm}Q_j:=Q(\frac{u^\prime_j}{u_j}),\hspace{0.3cm}j=1,2,...,q.$$
By (3.20) and (3.21) we get $ P(W)=P(u_j)=u^n_jQ_j$, namely
$\frac{1}{(W-a_j)^n}=\frac{Q_j}{P(W)}$. Hence we get
$$\frac{1}{|W-a_j|}=\frac{|Q_j|^{\frac{1}{n}}}{|P(W)|^{\frac{1}{n}}}.\eqno(3.22)$$
Set
$$ F(z):=\sum^q_{j=1}\frac{1}{W(z)-a_j}.$$
By Lemma 3.1, we get
$$m(r,F)=m(r,\sum^q_{j=1}\frac{1}{W(z)-a_j})=
\sum^q_{j=1}m(r,\frac{1}{W(z)-a_j})+O_1(1).\eqno(3.23)$$
By (3.22)we
get
$$ |F(z)|\leq \sum^q_{j=1}\frac{1}{|W(z)-a_j|}\leq
\frac{1}{|P(W)|^{\frac{1}{n}}} \sum^q_{j=1}|Q_j|^{\frac{1}{n}}.$$
Then by (3.19) and (3.16), we get
$$ m(r,F) \leq
\frac{1}{n}m(r,\frac{1}{P(W)})+\frac{1}{n}\sum^q_{j=1}m(r,Q_j)+O(1)$$
$$\leq \frac{1}{n}T(r,P(W))-\frac{1}{n}N(r,\frac{1}{P(W)})+S(r,W)$$
$$\leq T(r,W)+\frac{k}{n}N(r,W)
+ \frac{n+k-1}{n}N_x(r,W) - \frac{1}{n}N(r,\frac{1}{P(W)})+S(r,W)$$
$$< T(r,W)+\frac{k}{n}N(r,W)
+ 2N_x(r,W) - \frac{1}{n}N(r,\frac{1}{P(W)})+S(r,W).\eqno(3.24)$$ By
(3.16),(3.23) and (3.24), we get
 $$m(r,W)+
\sum^q_{j=1}m(r,\frac{1}{W(z)-a_j})\leq
 \frac{k}{n}m(r,W)+m(r,F)$$
$$\leq(1+\frac{k}{n})T(r,W)+ 2N_x(r,W)+S(r,W)$$
$$<(2+\epsilon)T(r,W)+ 2N_x(r,W)+S(r,W).$$
Hence we get (3.13).\\
Note that $$m(r,\frac{1}{W(z)-a_j})\leq
T(r,W-a_j)-N(r,\frac{1}{W-a_j})+O(1)$$
$$=T(r,W)-N(r,\frac{1}{W-a_j})+S(r,W).\eqno(3.25)$$
Substituting (3.25) into (3.13), we get (3.14).
\end{proof}

\end{document}